  \newtheorem{theorem}{Theorem}
  \newtheorem{corollary}{Corollary}
  \newtheorem{lemma}{Lemma}
  \DeclareMathOperator{\re}{Re}
\DeclareMathOperator{\res}{Res}
\begin{document}

\title{A sieve problem and its application}

\author{Andreas Weingartner} 
\address{Department of Mathematics, Southern Utah University, 351 West University Boulevard, Cedar City, Utah 84720, USA} 
\email{weingartner@suu.edu} 
\begin{abstract}
Let $\theta$ be an arithmetic function and let $\mathcal{B}$ be the set of positive integers $n=p_1^{\alpha_1} \cdots p_k^{\alpha_k}$, which
satisfy $p_{j+1} \le \theta ( p_1^{\alpha_1}\cdots p_{j}^{\alpha_{j}})$  for $0\le j < k$. We show that $\mathcal{B}$ has a natural density, provide a criterion to determine whether this density is positive, and give various estimates for the counting function of $\mathcal{B}$. 
When $\theta(n)/n$ is non-decreasing, the set $\mathcal{B}$ coincides with the set of integers $n$ 
whose divisors $1=d_1< d_2 < \ldots <d_{\tau(n)}=n$ satisfy 
$d_{j+1} \le \theta( d_j  )$ for $1\le j <\tau(n)$.
\end{abstract} 
\maketitle

\section{Introduction}

Let $\theta$ be an arithmetic function, $\theta: \mathbb{N}\to \mathbb{R}\cup \{\infty\}$.  We write $\mathcal{B}$ (or $\mathcal{B}_\theta$) to denote the set of positive integers containing $n=1$ and all those $n \ge 2$ with prime factorization  $n=p_1^{\alpha_1} \cdots p_k^{\alpha_k}$, \
$p_1< p_2< \ldots < p_k$, which satisfy 
\begin{equation}\label{Bdef}
p_{j+1} \le \theta (p_1^{\alpha_1}\cdots p_{j}^{\alpha_{j}} ) \qquad (0\le j < k),
\end{equation}
where $p_1^{\alpha_1}\cdots p_{j}^{\alpha_{j}}$ 
is understood to be $1$ when $j=0$. 
Let $B(x)$ (or $B_\theta(x)$) be the number of positive integers $n\le x$ in $\mathcal{B}$.
The following list shows some examples of $\theta$ and its corresponding set $\mathcal{B}$.
\medskip

\begin{itemize}
 \item   If $\theta(n)=2n$,   then $\mathcal{B}$  is the set of integers with $2$-dense divisors, i.e. integers $n$ which have a divisor
 in every interval $[y,2y]$ for $1\le y \le n$ (see \cite{Saias1,Ten86,PDD}).
\item    If $\theta(n)=\sigma(n)+1$,  where $\sigma(n)$ is the sum-of-divisors function, 
then $\mathcal{B}$  is the set of practical numbers, i.e. integers $n$ such that every 
$1\le m\le n$ can be written as a sum of distinct positive divisors of $n$ (see \cite{Saias1,Ten86,PDD} and the references therein).
 \item   If $\theta(n) = n+1$,  then $\mathcal{B}$  is the set of even $\varphi$-practical numbers, i.e. even integers $n$ such that 
 the polynomial $X^n-1$ has a divisor in $\mathbb{Z}[X]$ of every degree from $1$ to $n$ (see \cite{PTW,Thom,Thom2}).
\end{itemize}

Building on earlier work by Tenenbaum \cite{Ten86} and Saias \cite{Saias1}, we found 
\cite[Theorem 1.2]{PDD} that 
$$B(x)\sim \frac{c_\theta x}{\log x} \qquad (x \to \infty),$$
for some positive constant $c_\theta$,
provided $\theta(n)$ satisfies
$$\max(2,n)\le \theta(n) \ll \frac{n \log 2n}{(\log\log 3n)^{1+\varepsilon} }\quad (n\ge 1),$$
for some $\varepsilon>0$. This applies to each of the three examples listed above.

In this note, our goal is to investigate the set $\mathcal{B}$ in general,
without any restrictions on $\theta$. 
We will show that $\mathcal{B}$ always has a natural density (Theorem \ref{thmB}) and
provide a criterion to determine whether this natural density is positive or zero (Theorem \ref{cor}).
We give estimates for $B(x)$ with explicit error terms, first
without any assumptions on $\theta$ (Theorem \ref{thmB}), and then under certain conditions 
on the size of $\theta(n)$ (Corollary \ref{corB} and Theorem \ref{thm4}).

As an application, we consider the following set, related to the distribution of divisors.
Let $\mathcal{D}$ be the set of positive integers containing $n=1$ and all those $n \ge 2$, whose divisors  $1=d_1< d_2 < \ldots <d_{\tau(n)}=n$ satisfy 
\begin{equation}\label{Ddef}
d_{j+1} \le \theta( d_j  ) \qquad (1\le j <\tau(n)).
\end{equation}
We write $D(x)$ for the number of positive integers $n\le x$ in $\mathcal{D}$.
Theorem \ref{DB} shows that $\mathcal{B}=\mathcal{D}$ provided $\theta(n)/n$ is non-decreasing, so that
all results concerning $B(x)$ also apply to $D(x)$ under this assumption.

\section{Statement of results}

Let $P^+(n)$ (resp. $P^-(n)$) denote the largest (resp. smallest) prime factor of $n\ge 2$ and put $ P^+(1)=1$, $ P^-(1)=\infty$.

Note that replacing $\theta(n)$ by $\max(\theta(n),P^+(n))$ in \eqref{Bdef}  
leaves the set $\mathcal{B}$ unchanged, because of the assumption $p_1< p_2< \ldots < p_k$.
Moreover, if $\theta(1)<2$ then $\mathcal{B}=\{1\}$.
Thus, we may assume from now on, without any loss of generality, that
\begin{equation}\label{theta}
\theta: \mathbb{N}\to \mathbb{R}\cup \{\infty\}, \quad \theta(1)\ge 2, \quad  \theta(n)\ge P^+(n) \quad (n\ge 2).
\end{equation}

Let $\chi(n)$ be the characteristic function of the set $\mathcal{B}$.
We shall see in  Lemma \ref{lemL} that the series 
\begin{equation*}
L=\sum_{n\ge 1} \frac{\chi(n)}{n} \prod_{p\le \theta(n)} \left(1-\frac{1}{p}\right)
\end{equation*}
converges to a value $0\le L \le 1$.
Theorem \ref{thmB} shows that, for every choice of $\theta$, the set $\mathcal{B}$ has a natural density,
which is given by $1-L$.

\begin{theorem}\label{thmB}
Let $\theta$ satisfy \eqref{theta}.
We have
\begin{equation}\label{thmB2}
B(x) = (1-L)x + o(x).
\end{equation}
More precisely, 
\begin{equation}\label{thmB1}
B(x)=(1-L)x +O\left(1 + x \sum_{n\ge 1} \frac{\chi(n)}{n \log \theta(n)} \exp\left(-\frac{\max(0,\log (x/n))}{3\log \theta(n)}\right)\right).
\end{equation}
\end{theorem}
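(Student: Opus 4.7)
The plan is to start from the unique decomposition $m = n r$ enjoyed by every positive integer $m$, where $n \in \mathcal{B}$ is the longest prefix of $m$'s increasing prime factorization lying in $\mathcal{B}$ and $r = m/n$ satisfies $P^{-}(r) > \theta(n)$, with $r = 1$ exactly when $m \in \mathcal{B}$. Summing over $m \le x$ and isolating the $r = 1$ contribution yields the central identity
$$\lfloor x \rfloor - B(x) \;=\; \sum_{n \in \mathcal{B},\, n \le x} \bigl(\Phi(x/n, \theta(n)) - 1\bigr),$$
where $\Phi(y, z) = \#\{r \le y : P^{-}(r) > z\}$. Observe that $\Phi(y, z) = 1$ whenever $y < z$, so only $n$ with $x/n \ge \theta(n)$ contribute on the right.

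Next I invoke a standard sieve-of-Eratosthenes estimate of the shape
$$\Phi(y, z) \;=\; y W(z) + O\bigl(y W(z)\, e^{-u/3}\bigr), \qquad y \ge z \ge 2,\ u = \log y / \log z,$$
with $W(z) = \prod_{p \le z}(1 - 1/p)$. The constant $1/3$ is far from sharp but follows routinely from Buchstab's iteration or a Brun-type sieve, and I expect the authors establish it as a preliminary lemma. Substituting this into the identity above and writing
$$\sum_{\substack{n \in \mathcal{B}\\ n\theta(n) \le x}} \frac{\chi(n) W(\theta(n))}{n} \;=\; L - T(x), \qquad T(x) \;=\; \sum_{\substack{n \in \mathcal{B}\\ n\theta(n) > x}} \frac{\chi(n) W(\theta(n))}{n}$$
(the tail being finite thanks to Lemma \ref{lemL}), the main contributions assemble into $xL$ and one is left with
$$B(x) - (1-L)x \;=\; O(1) + |\mathcal{S}| + x T(x) + O\bigl(x \Sigma(x)\bigr),$$
where $\mathcal{S} = \{n \in \mathcal{B} : n \le x,\ x/n \ge \theta(n)\}$ and $\Sigma(x) = \sum_{n \in \mathcal{S}} \chi(n) W(\theta(n))/n \cdot e^{-u_n/3}$ is the accumulated sieve error.

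The remaining task is to absorb $|\mathcal{S}|$ and $x T(x)$ into the stated error sum of \eqref{thmB1}, which via Mertens' estimate $W(z) \asymp 1/\log z$ is essentially $x \sum_{n} \chi(n) W(\theta(n))/n \cdot \exp(-\max(0, u_n)/3)$. For $x T(x)$ this is immediate, since on its range $n\theta(n) > x$ the factor $\exp(-\max(0, \log(x/n))/(3\log\theta(n)))$ is bounded below by $e^{-1/3}$. For $|\mathcal{S}|$, each summand $x \chi(n) W(\theta(n))/n \cdot e^{-u_n/3}$ with $n \in \mathcal{S}$ (so $u_n \ge 1$ and $\theta(n) \ge 2$) is bounded below by an absolute positive constant, via the elementary inequality $\theta^u e^{-u/3}/\log\theta \ge c > 0$ valid for $\theta \ge 2$ and $u \ge 1$; hence $|\mathcal{S}| \ll x \Sigma(x)$. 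The main technical obstacle I anticipate is producing the sieve estimate for $\Phi$ with uniform exponential decay in $u$ over the entire range $y \ge z \ge 2$; everything afterwards is routine bookkeeping, relying only on the convergence of $L$ established by Lemma \ref{lemL}.
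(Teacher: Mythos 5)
Your proposal is correct and follows essentially the same route as the paper: both start from the identity $[x]=\sum_{n\le x}\chi(n)\,\Phi(x/n,\theta(n))$ (Lemma~\ref{mainlemma}), apply the sieve estimate of Lemma~\ref{philemma}(ii) on the range $n\theta(n)\le x$, complete the sum to $L$, and absorb the ``$-1$'' count of $\mathcal{S}$ and the tail $T(x)$ into the stated error term. The only presentational difference is that you name $\mathcal{S}$, $\Sigma(x)$, and $T(x)$ explicitly and justify the absorptions more carefully, whereas the paper groups them into $E_1(x)$ and $E_2(x)$ and handles the $|\mathcal{S}|$ absorption silently.
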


Theorem \ref{cor} provides a simple criterion to determine whether $L=1$ and $B(x)=o(x)$, or $L<1$ and $B(x)\sim (1-L)x$.

\pagebreak

\begin{theorem}\label{cor}
Let $\theta$ satisfy \eqref{theta}.
\begin{enumerate}
\item[(i)]
Assume that $\theta(n) \le f(n)$ where $n \log f(n)$ is increasing.
If 
\begin{equation*}
 \sum_{n\ge 1} \frac{1}{n \log f(n)} 
\end{equation*}
diverges, then $L=1$ and $B(x)=o(x)$.
\item[(ii)]
Assume that $\theta(n) \ge f(n)\ge 2$, $f(n)\gg P^+(n)$, and that
for every $t\ge 1$ there exists an $r\in \mathbb{N}$, such that $f(2^r n) \ge t f(n)$ for all $n\ge 1$. If 
$$
\sum_{n\ge 1} \frac{1}{n \log f(n)}
$$
converges, then $L<1$ and $B(x)\sim(1-L)x$.
\item[(iii)] $B(x) \sim x $ $ \Leftrightarrow $ $L=0$  $ \Leftrightarrow $  $\theta(n)=\infty $ for all $ n\ge 1$  $ \Leftrightarrow  $ $B(x)=[x]$.
\end{enumerate}
\end{theorem}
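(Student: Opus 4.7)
I plan to use Theorem \ref{thmB} as a black box: its conclusion \eqref{thmB2} reduces statements about $L$ to corresponding statements about the density of $\mathcal{B}$, namely $L=1 \Leftrightarrow B(x)=o(x)$ and $L<1 \Leftrightarrow B(x) \sim (1-L)x$ with $1-L>0$. I will also rely on the basic monotonicity $\theta_1 \le \theta_2 \Rightarrow \mathcal{B}_{\theta_1} \subseteq \mathcal{B}_{\theta_2}$, hence $L_{\theta_1} \ge L_{\theta_2}$. Combined with the trivial bound $L \le 1$, this lets me reduce (i) to the case $\theta = f$ and (ii) to the case $\theta = f$ (after replacing $f$ with $\max(f, P^+)$, which does not disturb the hypotheses because $f \gg P^+$).

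For (iii), I would close the cycle $\theta \equiv \infty \Rightarrow B(x)=[x] \Rightarrow B(x)\sim x \Rightarrow L=0 \Rightarrow \theta \equiv \infty$. The first implication is immediate because \eqref{Bdef} becomes vacuous, the second is trivial, and the third follows from \eqref{thmB2}. For the last, non-negativity of each summand in the series for $L$ forces $\chi(n)\prod_{p\le \theta(n)}(1-1/p)=0$ for every $n$; since $\chi(1)=1$ this yields $\theta(1)=\infty$, and a straightforward induction on $\Omega(n)$ propagates the conclusion to show $\mathcal{B}=\mathbb{N}$ and $\theta \equiv \infty$.

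For (i), after reducing to $\theta=f$, I aim to show $B_f(x)=o(x)$. The standard tool is the recursive decomposition of each $n \in \mathcal{B}_f$ with $n>1$ as $n=mp^\alpha$, where $p=P^+(n)$, $\alpha=v_p(n)$, $m\in\mathcal{B}_f$, and $P^+(m)<p\le f(m)$. Summing over this decomposition and using $\lfloor\log_p(x/m)\rfloor \le \log(x/m)/\log p$ together with Chebyshev-type prime estimates, I expect an inequality of the form $B_f(x) \ll x/S(x)$, where $S(x)=\sum_{n\le x} 1/(n\log f(n))$; Abel summation against the monotone quantity $n\log f(n)$ then converts the divergence $S(x)\to \infty$ into $B_f(x)=o(x)$, hence $L_f=1$.

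For (ii), after reducing to $\theta=f$, I want $L_f<1$, equivalently $B_f(x)\gg x$. The Mertens bound $\prod_{p\le y}(1-1/p)\ll 1/\log y$ gives $L_f\le C\sum_{n\ge 1}1/(n\log f(n))<\infty$, but this only shows $L_f$ is finite, not that it is strictly below $1$. To obtain strict inequality I would fix $\varepsilon>0$, choose $N$ large so that $\sum_{n>N}1/(n\log f(n))<\varepsilon$, and then use the scaling hypothesis $f(2^r n)\ge t f(n)$ to construct an explicit positive-density subset of $\mathcal{B}_f$: scaling guarantees that for every $n\in\mathcal{B}_f$ there is room to append many further primes while remaining in $\mathcal{B}_f$, and combining this with the tail smallness forces $B_f(x)/x$ to have a positive limit inferior. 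The main obstacle is precisely this step in (ii): converting the scaling hypothesis into a quantitative positive lower bound on $B_f(x)$, rather than merely a convergent upper bound on $L_f$, is the subtle point where the scaling assumption must do its work.
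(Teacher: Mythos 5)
Your sketch of part (iii) is fine, but parts (i) and (ii) have genuine gaps, and in each case the paper's argument is quite different from what you propose.

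For (i), the paper does \emph{not} reduce to $\theta=f$ (a reduction which would in any case require verifying that $\max(f,P^+)$ still satisfies the divergence hypothesis, which you do not address). Instead it argues by contradiction, directly from $L$ and Lemma~\ref{lemL}. Suppose $L<1$; then Theorem~\ref{thmB} gives $B(n)\ge cn$ for some $c>0$. With $g(n)=\log f(n)$, summation by parts (used forward and then backward) gives
\[
\sum_{n\le N}\frac{\chi(n)}{n g(n)}
=\frac{B(N)}{N g(N)}+\sum_{n\le N-1}B(n)\left(\frac{1}{n g(n)}-\frac{1}{(n+1)g(n+1)}\right)
\ge \sum_{n\le N}\frac{c}{n g(n)},
\]
where the monotonicity of $n g(n)$ guarantees the summands in the middle are nonnegative, so the lower bound $B(n)\ge cn$ is legitimate. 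The right side diverges, yet the left side is bounded, since $g(n)\ge\log\theta(n)$ and so by Mertens and Lemma~\ref{lemL}
\[
\sum_{n\le N}\frac{\chi(n)}{n g(n)}\ll\sum_{n\ge 1}\frac{\chi(n)}{n}\prod_{p\le\theta(n)}\!\left(1-\frac1p\right)=L\le 1.
\]
This contradiction forces $L=1$. Your proposed direct route---obtaining $B_f(x)\ll x/S(x)$ from the $n=mp^\alpha$ decomposition and Chebyshev---is not carried out and is not the bound that actually emerges from the partial-summation identity; I would not rely on it without a full proof.

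For (ii), you correctly identify that a convergent upper bound on $L_f$ is not enough and that the scaling hypothesis must somehow produce $L<1$, but you leave that step open. The paper's resolution is in two moves. First, it shows that for a sufficiently large constant $t\ge 1$ the \emph{rescaled} function $tf$ already has $L_{tf}<1$: using $\chi\le 1$ and Mertens,
\[
L_{tf}\le\sum_{n\ge 1}\frac1n\prod_{p\le t f(n)}\!\left(1-\frac1p\right)\ll\sum_{n\ge 1}\frac{1}{n\log(tf(n))},
\]
and since the series $\sum 1/(n\log f(n))$ converges, dominated convergence makes the right side tend to $0$ as $t\to\infty$; pick $t$ so the bound is $<1$ and also $tf(n)\ge P^+(n)$ (possible since $f\gg P^+$). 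Then $B_{tf}(x)\gg x$ by Theorem~\ref{thmB}. Second, the scaling hypothesis supplies an $r$ with $f(2^r n)\ge tf(n)$ for all $n$; checking the chain of inequalities \eqref{Bdef} shows that $n\in\mathcal{B}_{tf}$ implies $2^r n\in\mathcal{B}_f$, hence $B_\theta(x)\ge B_f(x)\ge B_{tf}(x/2^r)\gg x$, which combined with Theorem~\ref{thmB} gives $L<1$. This injection $n\mapsto 2^r n$ is exactly the device you were looking for, and it is cleaner than the tail-truncation idea you sketch.
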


Note that the three examples of $\theta$ listed in the introduction satisfy $L=1$ and $B(x)=o(x)$.  
For an instance where $L<1$, consider $\theta(n)=2^n$, for which 
$L=0.7734...$ by numerical computation. 
(For $n\ge 30$ we used estimates for the Euler product with effective error bounds due to Rosser and Schoenfeld \cite[Theorem 7]{RS}.)
Hence \eqref{thmB2} implies that $B(x)=cx(1+o(1))$ with $c=1-L=0.2265...$, while
\eqref{thmB1} shows that we have $B(x)=cx(1+O(1/\log x))$, when $\theta(n)=2^n$.

Corollary \ref{corB} generalizes this example to $\log\theta(n) \asymp n^a$, where $a$ is a positive constant.
We also consider the case $\log\theta(n) \asymp (\log 2n)^a$, where $a>1$ is constant. 
(The notation $f(n) \asymp g(n)$ means that $f(n)\ll g(n)$ (i.e. $f(n)=O(g(n))$) and $g(n)\ll f(n)$.)

\begin{corollary}\label{corB}
Let $\theta$ satisfy \eqref{theta}.
\begin{enumerate}
\item[(i)]
If $\log\theta(n) \asymp n^a$ for some constant $a>0$, then $0<L<1$ and 
$$B(x) = (1-L) x\left(1 + O\left(\frac{1}{\log x}\right)\right).$$
\item[(ii)]
If $\log\theta(n) \asymp (\log 2n)^a$ for some constant $a>1$, then $0<L<1$ and
$$B(x) = (1-L) x\left(1 + O\left(\frac{1}{(\log x)^{1-1/a}}\right)\right).$$
\end{enumerate}
\end{corollary}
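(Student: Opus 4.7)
The plan is to derive the qualitative statements ($0<L<1$ and $B(x)\sim(1-L)x$) from Theorem~\ref{cor}, and to extract the explicit error terms directly from the quantitative bound~\eqref{thmB1}. For the qualitative part I would apply Theorem~\ref{cor}(ii) with $f(n)=\max(2,\exp(c_1 n^a))$ in case~(i), and $f(n)=\max(2,\exp(c_1(\log 2n)^a))$ in case~(ii), where $c_1$ is the implicit lower constant in $\log\theta(n)\gg n^a$, respectively $\log\theta(n)\gg(\log 2n)^a$. Both choices make $\sum_{n\ge 1}1/(n\log f(n))$ converge (geometrically in~(i); as a Bertrand-type series using $a>1$ in~(ii)), and the doubling hypothesis $f(2^r n)\ge t f(n)$ reduces to showing that the relevant exponent is unbounded in $r$ uniformly in $n\ge 1$. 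In case~(ii) the exponent $(\log 2n+r\log 2)^a-(\log 2n)^a$ is increasing in $n$ because $a>1$, so the worst case $n=1$ is the only one to check, and it is handled directly. Theorem~\ref{cor}(iii) gives $L>0$ since $\theta$ is finite-valued.

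For the error term it suffices to bound
$$S:=\sum_{n\ge 1}\frac{1}{n\log\theta(n)}\exp\!\left(-\frac{\max(0,\log(x/n))}{3\log\theta(n)}\right)$$
by $O(1/\log x)$ in case~(i) and by $O((\log x)^{-(1-1/a)})$ in case~(ii), after bounding $\chi(n)\le 1$. I would split $S=S_{>x}+S_{\le x}$. The tail $S_{>x}\le\sum_{n>x}1/(n\log\theta(n))$ gives $O(x^{-a})$ in~(i) and $O((\log x)^{-(a-1)})$ in~(ii); since $a+1/a\ge 2$, the latter is majorized by $(\log x)^{-(1-1/a)}$, matching the target.

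The main work is bounding $S_{\le x}$. I would pass to an integral and split at the scale where $\log\theta(n)$ is comparable to $\log x$: namely $n_0=(\log x)^{1/a}$ in~(i), and, after the substitution $v=\log 2n$, $v_0=y^{1/a}$ with $y=\log(2x)$ in~(ii). Above the threshold the exponential is harmless and a direct $n^{-1-a}$ (resp.\ $v^{-a}$) tail estimate gives $n_0^{-a}\asymp 1/\log x$ (resp.\ $v_0^{-(a-1)}\asymp y^{-(1-1/a)}$). Below the threshold the inequality $\log(x/n)\ge\tfrac12\log x$ lets me replace the exponent by $\log x/(6c_2 n^a)$, and the substitution $t=\log x/(6c_2 n^a)$ collapses $dn/n^{1+a}$ into a constant multiple of $dt/\log x$, reducing the integral to a Gamma tail $(\log x)^{-1}\int_{1/(6c_2)}^{\infty}e^{-t}\,dt\ll 1/\log x$; the analogous substitution in~(ii) produces a scaling of $y^{-(1-1/a)}$ with integrand $t^{-1/a}e^{-t}$, finite since $a>1$. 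The principal obstacle I expect is verifying that this substitution matches the two halves of the split at the same order of magnitude, but once the threshold and scaling are chosen correctly the algebra works out cleanly.
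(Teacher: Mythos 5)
Your proposal is correct and follows the same strategy the paper gestures at: apply Theorem~\ref{cor}(ii) with $f(n)=\max(2,\exp(c n^a))$, resp.\ $f(n)=\max(2,\exp(c(\log 2n)^a))$, to get $L<1$, use Theorem~\ref{cor}(iii) for $L>0$, and then bound the error sum in \eqref{thmB1} with $\chi(n)\le 1$. The splitting at $n_0=(\log x)^{1/a}$ (resp.\ $v_0=(\log 2x)^{1/a}$) and the Gamma-type substitution carry out exactly what the paper dismisses as ``easily derived,'' and the exponent bookkeeping checks out: $dn/n^{1+a}\asymp dt/\log x$ in case~(i) and $dv/v^a\asymp(\log x)^{-(1-1/a)}t^{-1/a}\,dt$ in case~(ii), with the last integrand integrable at~$0$ because $a>1$. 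The monotonicity argument for the doubling hypothesis in case~(ii) (derivative of $(u+h)^a-u^a$ is $a((u+h)^{a-1}-u^{a-1})>0$ for $a>1$) is also sound. In short: same route as the paper, but with the omitted computations supplied.
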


The error terms in Corollary \ref{corB} are easily derived from \eqref{thmB1}, using the trivial bound $\chi(n)\le 1$.
The claim that $0<L<1$ follows directly from Theorem \ref{cor}, with (i) $f(n)=\max(2,\exp(c n^a))$ and
(ii) $f(n)=\max(2,\exp(c (\log 2n)^a))$, where $c>0$ is a suitable constant.
Other examples of $\theta$, for which $L<1$, can be dealt with similarly.

When $B(x)=o(x)$, we need a different strategy for obtaining an asymptotic formula for $B(x)$,
since the estimate \eqref{thmB1} provides only an upper bound for $B(x)$ whenever $L=1$. 
We will focus on the case $\theta(n) \asymp n^a$, where $a\ge 1$ is constant. 
Theorem \ref{cor} shows that $L=1$ and $B(x)=o(x)$.
Theorem \ref{thm4} generalizes \cite[Theorem 1.2]{PDD}, where the case $a=1$ is established with $\lambda_1=1$.

\begin{theorem}\label{thm4}
Let $\theta$ satisfy \eqref{theta} and assume $\theta(n)\asymp n^a$ for some constant $a\ge 1$. 
Then there are constants $c_\theta>0$ and $\lambda_a\in (0,1]$, such that
\begin{equation}\label{Basymp}
B(x) =\frac{ c_\theta x}{(\log x)^{\lambda_a}}\left(1+ O\left(\frac{1}{\log x}\right)\right).
\end{equation}
Here $s=-\lambda_a$ is the unique solution in the interval $[-1,0)$ of the equation
\begin{equation}\label{lambdaeq}
0= s + \frac{e^{-\gamma}}{a (a+1)^s} 
+s \int_1^\infty \bigl( \omega(t) - e^{-\gamma}\bigr)   \frac{dt}{(at+1)^{s+1}}, 
\end{equation}
where $\omega(t)$ denotes Buchstab's function and $\gamma$ is Euler's constant. For $a\ge 1$, 
\begin{equation}\label{LB}
  \lambda_a>\frac{e^{-\gamma}}{a}\left(1+\frac{e^{-\gamma}\log (a+1)}{a}-\frac{0.16}{a}\right)
\end{equation}
and
\begin{equation}\label{UB}
\lambda_a< \frac{e^{-\gamma}}{a}\left(1+\frac{e^{-\gamma}\log (a+1)}{a}+\frac{\log^2(a+1)}{a^2}\right). 
\end{equation}
\end{theorem}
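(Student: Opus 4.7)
The plan is to extend the method of \cite[Theorem 1.2]{PDD}, which establishes the result in the case $a=1$ (with $\lambda_1=1$), to arbitrary $a\ge 1$. First I would derive a Buchstab-style functional equation for $B(x)$. Decomposing $n\in \mathcal{B}$ with $n>1$ as $n=mp^\alpha$ where $p^\alpha\|n$ and $p=P^+(n)$, condition \eqref{Bdef} is equivalent to $m\in \mathcal{B}$, $P^+(m)<p\le \theta(m)$, and $p^\alpha m\le x$, giving
\[ B(x)=1+\sum_{m\in \mathcal{B}} \sum_{P^+(m)<p\le \theta(m)} \sum_{\alpha\ge 1,\, p^\alpha m\le x} 1. \]
The contribution from $\alpha\ge 2$ is $O(\sqrt{x}\log x)$, and the prime number theorem with a classical error term converts the remaining sum over primes into a logarithmic integral restricted to the interval $(P^+(m),\min(\theta(m),x/m)]$.

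Next, under the hypothesis $\theta(n)\asymp n^a$, I would seek a solution of the form $B(x)=c_\theta x(\log x)^{-\lambda_a}(1+O(1/\log x))$. Counting $m\le y$ in $\mathcal{B}$ whose largest prime factor has a prescribed size requires the asymptotic density of integers with a given smallest prime factor on the complementary scale, and this is where Buchstab's function $\omega(t)$ enters, via $\#\{m\le y:P^-(m)>y^{1/t}\}\sim (y/\log y)\,\omega(t)$. Substituting the ansatz into the functional equation, changing variables so that the exponents $(at+1)$ reflect the constraint $p\le \theta(m)\asymp m^a$, and matching the leading-order coefficients on both sides yields the characteristic equation \eqref{lambdaeq} with $s=-\lambda_a$; the constant $e^{-\gamma}=\lim_{t\to\infty}\omega(t)$ arises as the ``bulk'' density. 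Uniqueness of the root in $[-1,0)$ follows from strict monotonicity of the right-hand side of \eqref{lambdaeq} in $s$ on that interval.

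Upgrading the error from $o(1)$ to $O(1/\log x)$ would proceed by an iterative bootstrapping argument in the style of Tenenbaum--Saias: the functional equation is viewed as a fixed-point equation, and any approximation to $B(x)$ with error $O(1/\log x)$ remains in that class after one iteration, with contraction provided by the fact that $s=-\lambda_a$ is a simple root of the characteristic equation. The main technical obstacle I anticipate is propagating the sharp error uniformly over the range $x/2\le y\le x$ during the iteration; this requires a bilinear-type estimate in the same spirit as \cite[\S 3]{PDD} and is the step where the two-sided hypothesis $\theta(n)\asymp n^a$ (rather than a one-sided bound) is genuinely used.

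Finally, the bounds \eqref{LB} and \eqref{UB} would follow from a direct analysis of \eqref{lambdaeq}. A first-order approximation $\lambda_a\approx e^{-\gamma}/a$ arises by balancing $s$ against $e^{-\gamma}/(a(a+1)^s)$ near $s=0$, and substituting back produces the correction $e^{-2\gamma}\log(a+1)/a^2$. The residual integral term, convergent thanks to the rapid decay $|\omega(t)-e^{-\gamma}|\ll \rho(t-1)$ with $\rho$ Dickman's function, would be controlled by splitting at a moderate cutoff $T$: the decay bound handles $t\ge T$, while for $1\le t\le T$ one uses explicit numerical data on $\omega$ of the kind used to obtain the constant $L=0.7734\ldots$ in the $\theta(n)=2^n$ example. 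The asymmetric constants $0.16$ in \eqref{LB} and $\log^2(a+1)$ in \eqref{UB} reflect one-sided bounds on $\int_1^\infty(\omega(t)-e^{-\gamma})(at+1)^{-1}\,dt$.
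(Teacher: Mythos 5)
Your high-level roadmap (functional equation $\to$ characteristic equation $\to$ asymptotic extraction $\to$ numerical bounds) matches the paper's, but there is a genuine gap at the central analytic step. The paper does not argue by a fixed-point iteration on $B(x)$ directly; after converting the sieve identity $[x]=\sum_{n\le x}\chi(n)\,\Phi(x/n,\theta(n))$ of Lemma \ref{mainlemma} into a convolution equation $G_\theta(z)=\alpha-\frac1a\int_0^zG_\theta(u)\Omega_a(z-u)\,du+E_\theta(z)$ in the variable $z=\log\log x$, it takes Laplace transforms and studies the entire function $g_a(s)=s(1+\tfrac1a\widehat{\Omega}_a(s))$ whose reciprocal is $\widehat{F}_a(s)$. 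The error term $O(1/\log x)$ in \eqref{Basymp} is obtained by shifting the inversion contour past the simple zero at $s=-\lambda_a$ to $\re s=-\mu_a$ with $\mu_a>\lambda_a+1$. For this one must show that $g_a$ has \emph{no other zero}, real or complex, with $\re s\ge-\mu_a$, which the paper does with Rouch\'e's theorem (Lemma \ref{lambdamu}) plus the decay estimate of Lemma \ref{ga}. Your proposal controls only the real zero, via an asserted monotonicity of the right side of \eqref{lambdaeq} on $[-1,0)$; that does not address complex zeros, and ``contraction'' of a fixed-point map is not a substitute, because the exponent one can extract from a naive iteration saturates at $\nu=\lambda_a$ — the paper's bootstrap on the a priori bound $B(x)\ll x(\log x)^{-\nu}$ does exactly this and then \emph{still} needs the contour integral and Rouch\'e argument to turn $\nu=\lambda_a$ into the asymptotic with relative error $O(1/\log x)$.

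Two smaller issues. First, your opening recursion $B(x)=1+\sum_{m\in\mathcal{B}}\sum_{P^+(m)<p\le\theta(m)}\sum_{p^\alpha m\le x}1$ is a correct identity, but it introduces $P^+(m)$ as an extra parameter and your explanation of how $\omega$ arises — ``density of integers with a given smallest prime factor on the complementary scale'' — does not follow from this decomposition; if anything it would naturally produce Dickman's $\rho$. The paper sidesteps this entirely: Buchstab's $\omega$ enters directly through Lemma \ref{philemma}(i) applied to $\Phi(x/n,\theta(n))$, with no tracking of $P^+$ at all, and the $\alpha\ge 2$ bookkeeping you raise never occurs. Second, you never argue that $c_\theta>0$; in the paper this is a separate step, obtained by monotonicity of $\mathcal{B}$ under enlarging $\theta$ and comparison with the known case $\theta_0(n)=2n$ from \cite{PDD}. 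Your discussion of the bounds \eqref{LB}, \eqref{UB} via one-sided estimates of $\int_1^\infty(\omega(t)-e^{-\gamma})(at+1)^{-(s+1)}\,dt$ is on target in spirit, though the paper's decay input is $|\omega(t)-e^{-\gamma}|\le 1/\Gamma(t+1)$ (Lemma \ref{omega}), not a Dickman bound.
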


Figure \ref{figure1} and Table \ref{table1} show several values of $\lambda_a$, obtained by solving equation \eqref{lambdaeq} numerically.

As in \cite[Theorem 1.2]{PDD} with $a=1$, one can consider a less restrictive condition on $\theta$, such as 
$n^a (\log n)^{-b} \ll \theta(n) \ll n^a (\log n)^{b}$, where $0\le b<1$, and establish the estimate \eqref{Basymp}
with the relative error term $O(1/\log x)$ replaced by $O(1/(\log x)^{1-b})$. However, we will not pursue this here.

 \begin{figure}[h]
\begin{center}
\includegraphics[height=7cm,width=12cm]{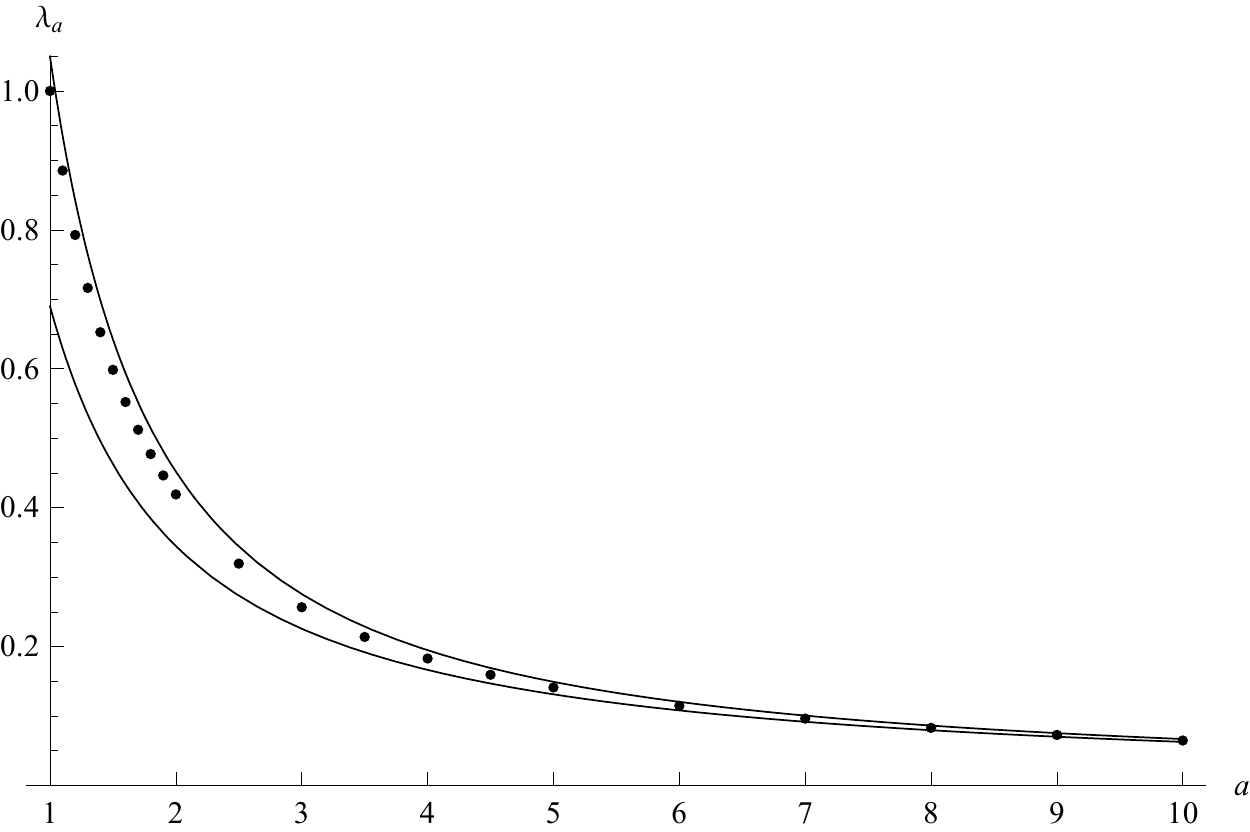}
\caption{Values of $\lambda_a$ together with the bounds \eqref{LB} and \eqref{UB}.}
\label{figure1}
\end{center}
\end{figure}

\begin{table}[h]
  \begin{tabular}{ | l | l | }
    \hline
    $a$ & $\lambda_a$ \\ \hline
    1 & 1  \\ \hline
    1.1 &  0.8854... \\ \hline
    1.2 & 0.7927... \\ \hline
    1.3 & 0.7164... \\ \hline
    1.4 & 0.6526... \\ \hline
    1.5 & 0.5985...   \\ \hline
    1.6 & 0.5522...   \\ \hline
    1.7 & 0.5122...   \\ \hline
    1.8 & 0.4772...   \\ \hline
    1.9 & 0.4464...   \\ \hline
    2 & 0.4191...    \\ \hline
      \end{tabular}
      \qquad
     \begin{tabular}{ | l | l | }
    \hline 
     $a$ & $\lambda_a$ \\ \hline
    2.5 & 0.3195... \\ \hline
    3 & 0.2567...  \\ \hline
    3.5 & 0.2139...  \\ \hline
    4 & 0.1829...  \\ \hline
    4.5 & 0.1595...  \\ \hline
    5 & 0.1412... \\ \hline
    6 & 0.1147... \\ \hline
    7 & 0.09639... \\ \hline
    8 & 0.08301... \\ \hline
    9 & 0.07283... \\ \hline
    10 & 0.06484... \\ \hline
  \end{tabular}
  \bigskip
  \caption{Truncated values of $\lambda_a$.}
  \label{table1}
  \end{table}
  
We now turn to the distribution of divisors.  
Let $\mathcal{D}$ be the set defined in \eqref{Ddef}.
When $\theta(n)=tn$, where $t$ is constant,
Tenenbaum \cite[Lemma 2.2]{Ten86} showed that $\mathcal{D}=\mathcal{B}$.
We want to generalize this result as much as possible.
The example $\theta(n)=n+1$, for which $\mathcal{D}=\{1,2\}$ while $\mathcal{B}$ is infinite, 
illustrates that some condition on $\theta$ is required to ensure equality of these two sets. 
The condition we need is 
\begin{equation}\label{thetacond}
\theta(n) \le \theta(n+1), \quad m \theta(n) \le \theta(m n)\qquad (n,m\ge 1, \ \gcd(n,m)=1).
\end{equation}
Note that \eqref{thetacond} is satisfied if $\theta(n)/n$ is non-decreasing. 

\begin{theorem}\label{DB}
Let $\theta$ satisfy \eqref{theta}.
\begin{enumerate}
\item[(i)] We have $\mathcal{D}\subseteq \mathcal{B}$, hence $D(x)\le B(x)$. 
\item[(ii)] If $\theta$ satisfies \eqref{thetacond}, then $\mathcal{D}=\mathcal{B}$, hence $D(x)=B(x)$.
\end{enumerate}
\end{theorem}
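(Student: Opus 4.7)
For part (i), the inclusion $\mathcal{D}\subseteq \mathcal{B}$ is immediate. Given $n \in \mathcal{D}$ with factorization $n = p_1^{\alpha_1}\cdots p_k^{\alpha_k}$ and any $0 \le j < k$, set $N_j := p_1^{\alpha_1}\cdots p_j^{\alpha_j}$ (the empty product when $j=0$). Because $N_j$ is the largest divisor of $n$ involving only $p_1,\ldots,p_j$, the successor of $N_j$ in the ordered divisor list of $n$ must involve a prime strictly greater than $p_j$ and is therefore at least $p_{j+1}$; since $j < k$ ensures $N_j < n$, such a successor exists, and the $\mathcal{D}$-property bounds it by $\theta(N_j)$. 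This yields the $\mathcal{B}$-condition $p_{j+1} \le \theta(N_j)$.

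For part (ii) I would proceed by strong induction on the number of distinct prime factors of $n$, the case $n = 1$ being trivial. Suppose the conclusion holds whenever there are fewer than $k$ distinct prime factors, and take $n \in \mathcal{B}$ with exactly $k$. For any $d \mid n$ with $d < n$, it is enough to exhibit \emph{some} divisor $d'$ of $n$ with $d < d' \le \theta(d)$, because the actual successor of $d$ is then automatically $\le \theta(d)$. Writing $d = p_1^{\beta_1}\cdots p_k^{\beta_k}$, let $j$ be the smallest index with $\beta_j < \alpha_j$ and factor $d = NM$ with $N := p_1^{\alpha_1}\cdots p_{j-1}^{\alpha_{j-1}}$ and $M := d/N$; then $\gcd(N,M) = 1$ and $M$ uses only primes $\ge p_j$. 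The $\mathcal{B}$-conditions for $n$ restrict to those for $N$, and since $N$ has $j-1<k$ prime factors the inductive hypothesis delivers $N \in \mathcal{D}$.

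Let $e_0$ be the smallest divisor of $N$ satisfying $e_0 p_j > N$ and set $d' := e_0 M p_j$; a direct check shows $d' \mid n$ and $d' > d$. The multiplicative half of \eqref{thetacond} applied to the coprime pair $(N,M)$ gives $\theta(d) \ge M\theta(N)$, reducing the target to $e_0 p_j \le \theta(N)$. If $e_0 = 1$ (equivalently $p_j > N$), this is exactly the $\mathcal{B}$-hypothesis $p_j \le \theta(N)$. Otherwise let $e_0'$ be the divisor of $N$ immediately preceding $e_0$; by the choice of $e_0$ one has $e_0' p_j \le N$, and $N \in \mathcal{D}$ supplies $e_0 \le \theta(e_0')$, so combining $\gcd(e_0',p_j) = 1$ with both halves of \eqref{thetacond} produces the chain $e_0 p_j \le p_j\theta(e_0') \le \theta(p_j e_0') \le \theta(N)$, closing the argument.

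The main obstacle is the choice of the witness $d'$: the naive candidate $d \cdot p_j$ exceeds $\theta(d)$ by a factor of $N$, and the remedy is to replace $N$ by the smallest divisor $e_0$ of $N$ for which $e_0 p_j$ still passes $N$. The inductive statement $N \in \mathcal{D}$ is precisely what controls the size of $e_0$ via its predecessor $e_0'$, and both halves of \eqref{thetacond} are indispensable in the concluding chain of inequalities.
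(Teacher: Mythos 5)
Your part (i) is the same argument as the paper's. Your part (ii), however, takes a genuinely different route. The paper inducts by peeling off the \emph{largest} prime: it writes $n = m p^\alpha$ with $p > P^+(m)$, observes $m \in \mathcal{B}$ hence $m \in \mathcal{D}$ by induction, enumerates the divisors of $n$ as $p^\beta t_j$ where $t_1 < \cdots < t_r$ are the divisors of $m$, and checks $d_{i+1} \le \theta(d_i)$ by a three-way case split on whether $j < r$, or $j=r$ with $p>m$, or $j=r$ with $p\le m$. Your proof instead fixes an arbitrary divisor $d$ of $n$, locates the \emph{smallest} prime index $j$ with $\beta_j < \alpha_j$, splits $d = NM$ at that boundary, applies the inductive hypothesis to the saturated prefix $N$ (which has $j-1 < k$ primes), and then exhibits an explicit witness divisor $d' = e_0 M p_j$ lying in $(d, \theta(d)]$ --- where $e_0$, the least divisor of $N$ with $e_0 p_j > N$, is precisely what the $\mathcal{D}$-property of $N$ controls via its predecessor $e_0'$. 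I checked the details: $d' \mid n$ because $\beta_j + 1 \le \alpha_j$; $d' > d$ because $e_0 p_j > N$; and the chain $e_0 p_j \le p_j\theta(e_0') \le \theta(p_j e_0') \le \theta(N)$ together with $M\theta(N) \le \theta(d)$ is sound, using exactly the two halves of \eqref{thetacond} and the coprimality of $e_0'$ with $p_j$ and of $M$ with $N$. The paper's decomposition makes the divisor lattice of $n$ concrete and spreads the difficulty over several cases; your decomposition unifies those cases into a single witness construction, at the cost of the slightly less obvious definition of $e_0$ and the observation that the inductive hypothesis must be applied to the $d$-dependent prefix $N$ rather than to a fixed cofactor of $n$. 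Both are correct and of comparable length; yours is arguably cleaner in that it never has to enumerate the divisor list of $n$ itself.
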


As an example, consider $\theta(n)=n^2+1$. Theorems \ref{thm4} and \ref{DB} show 
that the number of integers $n\le x$ whose divisors 
$1=d_1< d_2 < \ldots <d_{\tau(n)}=n$ satisfy
$d_{j+1} \le d_j^2 +1$ for $1\le j < \tau(n)$,  is given by
$$D(x) = \frac{c x}{(\log x)^{\lambda_2}}\left(1+ O\left(\frac{1}{\log x}\right)\right),$$
where $c$ is a positive constant and $\lambda_2=0.4191...$

With $\theta(n)=2^n$, Corollary \ref{corB} and Theorem \ref{DB} show that the number of integers $n\le x$ whose divisors 
$1=d_1< d_2 < \ldots <d_{\tau(n)}=n$ satisfy
$d_{j+1} \le 2^{d_j}$ for $1\le j < \tau(n)$, is given by
$$D(x) = (1-L) x\left(1 + O\left(\frac{1}{\log x}\right)\right),$$
where $1-L=0.2265...$

The proof of Theorem \ref{thmB} is based on the functional equation in Lemma \ref{mainlemma}
and an estimate for the number of integers without small prime factors in Lemma \ref{philemma}.
Theorem  \ref{cor} is established with the help of Theorem \ref{thmB}.
The proof of Theorem \ref{thm4}, which requires the most amount of work,
is modeled after \cite[Theorem 1.2]{PDD}, with the added difficulty that 
the poles of the Laplace transform (i.e. the solutions of equation \eqref{lambdaeq}) 
now depend on the parameter $a$ (see Lemma \ref{lambdamu}).
The proof of Theorem \ref{DB} generalizes that of Tenenbaum \cite[Lemma 2.2]{Ten86}.

\section{Preliminaries}

Let
$$ \Phi(x,y) = \# \big\{ 1\le n\le x : P^-(n)>y \big\}  $$
and define $\Phi(x,\infty)=1$ for $x\ge 1$. 
For $u\ge 1$, Buchstab's function $\omega(u)$ is defined as the
unique continuous solution to the equation
\begin{equation*}
(u\omega(u))' = \omega(u-1) \qquad (u>2)
\end{equation*}
with initial condition
$u\omega(u)=1$ for $1\le u \le 2$.
Let $\omega(u)=0$ for $u<1$ and define $\omega$ at 1 and $\omega'$ at 1 and 2 by right-continuity. 
Let $\Gamma(u)$ denote the usual gamma function.

The calculation of the values of $\lambda_a$ in Table \ref{table1} and the 
approximation of several integrals in the proof of Lemma \ref{lambdamu} require
estimates for integrals involving $\omega(t)-e^{-\gamma}$.
For that purpose, we used exact formulas for $\omega(t)$ for $0\le t \le 5$, 
derived with the help of Mathematica. To estimate the contribution from $t>5$, 
we used a table of zeros and relative extrema of $\omega(t)-e^{-\gamma}$ on the interval $[5, 10.3355]$ due to Cheer and Goldston \cite{CG}, 
and the estimate (ii) from Lemma \ref{omega} for $t\ge 10.3355$.

\begin{lemma}\label{omega}
We have
\begin{enumerate}[(i)]
\item $|\omega'(u)|\le 1/\Gamma(u+1) \quad (u\ge 0)$,
\item $|\omega(u)-e^{-\gamma}| \le 1/\Gamma(u+1) \quad (u\ge 0)$.
\end{enumerate}
\end{lemma}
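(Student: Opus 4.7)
The plan is to verify both bounds directly on the base range $0\le u\le 2$, where $\omega$ is explicit, and then to propagate them to $u>2$ by induction on the integer part of $u$, using the functional equation for $\omega$ rewritten in terms of $\epsilon(u):=\omega(u)-e^{-\gamma}$.

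For the base case, on $[0,1)$ we have $\omega\equiv 0$, so (i) is trivial and (ii) reduces to $e^{-\gamma}\le 1/\Gamma(u+1)$; this holds because $\Gamma(u+1)\le 1$ for $u\in[0,1]$ (with equality at the endpoints), so $1/\Gamma(u+1)\ge 1>e^{-\gamma}$. On $[1,2]$, $\omega(u)=1/u$ and $\omega'(u)=-1/u^2$, so both inequalities become elementary analytic comparisons against $1/\Gamma(u+1)$; for example, (i) reduces to $\Gamma(u+1)\le u^2$ on $[1,2]$, which follows from matching values at $u=1$ together with slope and convexity considerations.

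For the inductive step I would use the reformulation $(u\epsilon(u))'=\epsilon(u-1)$ of the functional equation, valid for $u>1$. Because $\omega(u)\to e^{-\gamma}$ at a super-polynomial rate (as follows from any crude preliminary bound, which can be bootstrapped), $u\epsilon(u)\to 0$ as $u\to\infty$, and integrating from $u$ to $\infty$ yields the representation
\[
u\,\epsilon(u)=-\int_{u-1}^{\infty}\epsilon(s)\,ds,
\]
which together with the companion identity $u\,\omega'(u)=\epsilon(u-1)-\epsilon(u)$ expresses both $\epsilon(u)$ and $\omega'(u)$ in terms of values of $\epsilon$ at smaller arguments. This lets bounds of the desired form be propagated from $[n-1,n]$ to $[n,n+1]$ for every $n\ge 2$.

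The main obstacle is sharpness of constants: a pure triangle inequality in the integral representation leaves the quantity $\int_{u-1}^{\infty}ds/\Gamma(s+1)$, which is not uniformly $\le 1/\Gamma(u)$ when $u$ is small, so the induction cannot close on absolute values alone. To overcome this I would track sign information for $\epsilon$ on consecutive unit intervals, a property visible from the base case and preserved by $(u\epsilon(u))'=\epsilon(u-1)$: $\epsilon$ changes sign once per unit interval for $u\ge 1$, so the tail integral becomes an alternating series of quickly decreasing pieces and is bounded in absolute value by its first block, restoring the target estimate $|\epsilon(u)|\le 1/\Gamma(u+1)$. Once (ii) is in hand, (i) follows from $u\,\omega'(u)=\epsilon(u-1)-\epsilon(u)$ via the same sign analysis, or, alternatively, by running the same induction machinery on the differentiated relation $(u\,\omega'(u))'+\omega'(u)=\omega'(u-1)$ to obtain the bound for $\omega'$ independently.
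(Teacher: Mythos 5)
The paper itself does not prove this lemma---it simply cites [PDD, Lemma~2.1] (the author's earlier paper), so there is no in-paper proof to compare against. What follows is therefore a review of your argument on its own terms.

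Your base case on $[0,2]$ is sound, and the reformulation via $\epsilon(u)=\omega(u)-e^{-\gamma}$ and the delay equation $(u\epsilon(u))'=\epsilon(u-1)$ is the right framework. You have also correctly identified that a naive triangle inequality in $u\epsilon(u)=-\int_{u-1}^{\infty}\epsilon(s)\,ds$ does not close, since $\int_{u-1}^{\infty}\Gamma(s+1)^{-1}\,ds$ exceeds $1/\Gamma(u)$ for small $u$. However, your proposed fix is where the gap lies. You assert that $\epsilon$ changes sign exactly once per unit interval for $u\ge1$, that this is ``visible from the base case and preserved by'' the delay equation, and that the resulting alternating blocks are monotone so the tail integral is bounded by its first block. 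None of these three claims is elementary, and the first is in fact delicate: the location and spacing of the zeros of $\omega-e^{-\gamma}$ is precisely the subject of the Cheer--Goldston paper cited in the present article, where the zeros are tabulated numerically and their spacing is governed by the complex zeros of an associated transcendental equation (the spacing is close to, but not exactly, $1$). Deriving such sign structure rigorously would require a substantial argument of its own, not a one-line preservation claim; and even granting it, you would still need to prove monotone decay of the block magnitudes and bound the first block by $1/\Gamma(u)$, neither of which you address. There is also a structural issue: the integral representation reaches to $+\infty$, so the inductive hypothesis at stage $u$ requires information about $\epsilon$ at arbitrarily large arguments, which means the ``induction'' is really a bootstrap that must start from some a priori uniform bound; you gesture at this but do not supply it.

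A cleaner route avoids all of this by proving (i) first. For $u>2$ one has $u\omega'(u)=\omega(u-1)-\omega(u)=-\int_{u-1}^{u}\omega'(t)\,dt$, a genuinely local identity. Assuming $|\omega'(t)|\le 1/\Gamma(t+1)$ for $t<u$ and using that $\Gamma$ is increasing on $[2,\infty)$, so that $\Gamma(t+1)\ge\Gamma(u)$ for $t\in[u-1,u]$, gives $u|\omega'(u)|\le\int_{u-1}^{u}\Gamma(t+1)^{-1}\,dt\le 1/\Gamma(u)$, i.e.\ $|\omega'(u)|\le 1/\Gamma(u+1)$, and the induction on $\lfloor u\rfloor$ closes with only the explicit check on $[0,2]$ as the base. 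Then (ii) follows for $u\ge2$ from $|\omega(u)-e^{-\gamma}|\le\int_{u}^{\infty}|\omega'(t)|\,dt\le\int_{u}^{\infty}\Gamma(t+1)^{-1}\,dt$ together with the elementary (and verifiable, precisely for $u\ge2$) inequality $\int_{u}^{\infty}\Gamma(t+1)^{-1}\,dt\le 1/\Gamma(u+1)$; the range $0\le u\le2$ is handled directly as in your base case. This yields the stated constants without any appeal to the sign structure of $\omega-e^{-\gamma}$.
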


\begin{proof} See \cite[Lemma 2.1]{PDD}.
\end{proof}

\begin{lemma}\label{philemma}
Let $u=\log x / \log y$.
\begin{enumerate}
\item[(i)]
For $x\ge 1$, $y\ge 2$, we have
$$ \Phi(x,y)= e^\gamma x \omega(u) \prod_{p\le y} \left(1-\frac{1}{p}\right) +
O\left(\frac{y}{\log y} + \frac{x e^{-u/3}}{(\log y)^2}\right).$$
\item[(ii)]
 For $x\ge y \ge 2$ we have
$$ \Phi(x,y)=x \prod_{p\le y} \left(1-\frac{1}{p}\right) + O\left(\frac{x e^{-u/3}}{\log y}\right).$$
\end{enumerate}
\end{lemma}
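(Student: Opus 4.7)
The plan is to establish (i) by induction on $k=\lfloor u\rfloor$ via Buchstab's identity, and then to deduce (ii) from (i) together with Lemma \ref{omega}(ii). The main ingredients are Mertens' theorem in the effective form $\prod_{p\le y}(1-1/p)=e^{-\gamma}/\log y+O(1/\log^2 y)$ and the Prime Number Theorem $\pi(t)=t/\log t+O(t/\log^2 t)$.

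For the base case $u\le 2$, either $\Phi(x,y)=1$ (if $y>x$, in which case $\omega(u)=0$), or $\Phi(x,y)=1+\pi(x)-\pi(y)$ (if $\sqrt x\le y\le x$, in which case $\omega(u)=1/u$). Direct substitution into the target formula and comparison using PNT and Mertens yields the claimed bound, with the $-\pi(y)\sim -y/\log y$ term being responsible for the $y/\log y$ summand in the error.

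For the inductive step ($u>2$) one applies Buchstab's identity
$$
\Phi(x,y)=\Phi(x,\sqrt x)+\sum_{y<p\le\sqrt x}\Phi(x/p,p^-),
$$
with $\Phi(x/p,p^-)=\#\{m\le x/p:P^-(m)\ge p\}$. The first term lies in the base regime, and each summand satisfies $u_p:=\log(x/p)/\log p\in[1,u-1]$, so the inductive hypothesis applies. Partial summation with PNT converts the sum to $\int_y^{\sqrt x}e^{\gamma}(x/t)\omega(u_t)\prod_{q\le t}(1-1/q)\,dt/\log t$; after the substitution $s=\log t/\log y$ and an appeal to Mertens, this reduces to $(x/\log y)\int_1^{u/2}\omega((u-s)/s)\,ds/s^2$. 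Integrating $(u\omega(u))'=\omega(u-1)$ and changing variables gives $\int_1^{u/2}\omega((u-s)/s)\,ds/s^2=\omega(u)-1/u$; combined with the $\Phi(x,\sqrt x)\sim x/(u\log y)$ contribution from the base case, this reconstructs $e^{\gamma}x\omega(u)\prod_{p\le y}(1-1/p)$ to leading order. Accumulated errors are kept at $O(xe^{-u/3}/(\log y)^2+y/\log y)$ via the crude bound $|\omega|\le 1$ and Mertens at each recursive level.

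Part (ii) follows from (i) by writing
$$
e^{\gamma}x\omega(u)\prod_{p\le y}\bigl(1-\tfrac1p\bigr)-x\prod_{p\le y}\bigl(1-\tfrac1p\bigr)=x(e^{\gamma}\omega(u)-1)\prod_{p\le y}\bigl(1-\tfrac1p\bigr),
$$
and bounding $|e^{\gamma}\omega(u)-1|=e^{\gamma}|\omega(u)-e^{-\gamma}|\le e^{\gamma}/\Gamma(u+1)\ll e^{-u/3}$ by Lemma \ref{omega}(ii) combined with Stirling. Together with Mertens this discrepancy contributes $O(xe^{-u/3}/\log y)$. Under the hypothesis $x\ge y$ one verifies $y/\log y\le e^{1/3}xe^{-u/3}/\log y$ (the worst case being $u=1$), so the new error absorbs all the remainders from (i). The main obstacle is the bookkeeping for (i): ensuring that the cumulative Mertens and PNT remainders do not destroy the $e^{-u/3}$ decay across many levels of recursion. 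The comfortably loose exponent $1/3$ (instead of the sharp $\log u$) turns this into a careful but routine exercise.
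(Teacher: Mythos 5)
Your deduction of (ii) from (i) is exactly the paper's route (the paper's line ``follows from (i) and Lemma \ref{philemma}'' is evidently a typo for Lemma \ref{omega}): bound $|e^\gamma\omega(u)-1|\le e^\gamma/\Gamma(u+1)\ll e^{-u/3}$ by Lemma \ref{omega}(ii), combine with Mertens, and absorb $y/\log y$ using $x\ge y$. Your inequality $y\le e^{1/3}xe^{-u/3}$ is correct for $x\ge y\ge 2$, since it is equivalent to $(u-1)(\log y-\tfrac13)\ge 0$ and $\log y\ge\log 2>\tfrac13$.

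For (i), the paper gives no proof at all; it cites \cite[Lemma 2.2]{PDD}, which in turn appeals to the classical $\Phi(x,y)$ estimate. You instead sketch a self-contained Buchstab iteration. Your main-term algebra is sound: with $v=(u-s)/s$ one gets $\int_1^{u/2}\omega((u-s)/s)\,s^{-2}\,ds = u^{-1}\int_1^{u-1}\omega(v)\,dv = \omega(u)-u^{-1}$ from the Buchstab ODE, and adding the $\Phi(x,\sqrt x)\sim x/(u\log y)$ contribution recovers $e^\gamma x\omega(u)\prod_{p\le y}(1-1/p)$. The issue is the error bookkeeping that you dismiss as a ``careful but routine exercise.'' That is precisely the content of the cited lemma, and it is not routine: one needs an inductive hypothesis with an explicit constant, say $|R(x,y)|\le C\bigl(xe^{-u/3}/(\log y)^2 + y/\log y\bigr)$, and must check that after partial summation the accumulated $R(x/p,p)$ for $y<p\le\sqrt x$ (each with its own $u_p\in[1,u-1]$, so each exponential factor is weaker than $e^{-u/3}$ by a fixed amount) plus the fresh PNT/Mertens remainders at this level all fit in the same budget without inflating $C$. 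This stability of the constant across $O(u)$ levels is the crux of (i), and in your write-up it is asserted rather than verified. Since the paper delegates (i) to a reference, you have in fact attempted more than it does, but what you have written is an outline of a proof, not a proof.
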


\begin{proof}
(i) See \cite[Lemma 2.2]{PDD}.
Part (ii) follows from (i) and Lemma \ref{philemma}. 
\end{proof}

\begin{lemma}\label{mainlemma}
Let $\theta$ satisfy \eqref{theta}.
For $x\ge 0$ we have
\begin{equation*}
[x]=\sum_{n\le x} \chi(n) \, \Phi(x/n, \theta(n)) .
\end{equation*}
\end{lemma}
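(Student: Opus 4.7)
The plan is to prove the identity by establishing a bijection between the integers $m$ with $1\le m\le x$ and pairs $(n,r)\in \mathcal{B}\times \mathbb{N}$ satisfying $nr\le x$ and $P^-(r)>\theta(n)$. Once this bijection is in place, counting on the left gives $[x]$, while for each fixed $n\in\mathcal{B}$ with $n\le x$ the number of valid $r$ is, by definition, $\Phi(x/n,\theta(n))$ (including the convention $\Phi(y,\infty)=1$ when $\theta(n)=\infty$ and $y\ge 1$).

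The bijection comes from a canonical factorization of $m$. Given $m\ge 1$ with prime factorization $m=p_1^{\alpha_1}\cdots p_k^{\alpha_k}$, $p_1<\cdots<p_k$ (empty product when $m=1$), set $n_j=p_1^{\alpha_1}\cdots p_j^{\alpha_j}$ for $0\le j\le k$. The first step is to observe that membership in $\mathcal{B}$ is hereditary on prefixes: directly from \eqref{Bdef},
\begin{equation*}
n_{j+1}\in\mathcal{B}\ \Longleftrightarrow\ n_j\in\mathcal{B}\ \text{and}\ p_{j+1}\le \theta(n_j).
\end{equation*}
Since $n_0=1\in\mathcal{B}$, the set $\{j:n_j\in\mathcal{B}\}$ is a prefix $\{0,1,\ldots,J\}$ of $\{0,\ldots,k\}$. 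Define $n:=n_J$ and $r:=m/n=p_{J+1}^{\alpha_{J+1}}\cdots p_k^{\alpha_k}$.

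Next, I would verify that the pair $(n,r)$ satisfies the required conditions. By construction $n\in\mathcal{B}$. If $J<k$, the maximality of $J$ together with the displayed equivalence forces $p_{J+1}>\theta(n)$, so $P^-(r)=p_{J+1}>\theta(n)$. If $J=k$, then $r=1$ and $P^-(1)=\infty>\theta(n)$. In either case, $P^-(r)>\theta(n)\ge P^+(n)$ by \eqref{theta}, so $\gcd(n,r)=1$ and $nr=m$. Conversely, any pair $(n,r)$ with $n\in\mathcal{B}$ and $P^-(r)>\theta(n)$ determines $m=nr$ whose prime factorization begins with that of $n$, and the hereditary property makes that prefix maximal; so the correspondence is a bijection.

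Summing $1$ over the integers $m$ with $1\le m\le x$ and then regrouping according to the canonical pair $(n,r)$ yields
\begin{equation*}
[x]=\sum_{\substack{n\in\mathcal{B}\\ n\le x}}\,\#\{r\ge 1:r\le x/n,\ P^-(r)>\theta(n)\}=\sum_{n\le x}\chi(n)\,\Phi(x/n,\theta(n)),
\end{equation*}
as required; the case $0\le x<1$ is trivial since both sides vanish. The main obstacle is the hereditary/maximality argument of the previous paragraph, but it is a direct unwrapping of \eqref{Bdef}; the only points requiring care are the boundary cases $J=0$ (so $n=1$, $r=m$, requiring $\theta(1)\ge 2$ from \eqref{theta} so that $P^-(r)>\theta(1)$ is a meaningful constraint) and $J=k$ (so $r=1$, handled by the convention $P^-(1)=\infty$ and $\Phi(\cdot,\infty)=1$).
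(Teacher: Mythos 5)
Your proof is correct and takes the same approach the paper relies on: the paper cites \cite[Lemma 2.3]{PDD} and summarizes the argument as the unique factorization of each $m\le x$ as $m=nr$ with $n\in\mathcal{B}$ and $r$ counted by $\Phi(x/n,\theta(n))$, which is exactly the bijection you construct via the maximal prefix $n_J$ of the prime factorization. You have simply spelled out in full the hereditary/maximality argument that the paper leaves implicit by reference.
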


\begin{proof}
In \cite[Lemma 2.3]{PDD} we proved this result assuming $\theta(n)<\infty$.
The inclusion of $\infty$ in the possible values of $\theta$ has no effect on the proof. 
The basic idea is that every integer $1\le m\le x$ factors uniquely as $m=nr$, 
where $n \in \mathcal{B}$ and $r$ is counted in $\Phi(x/n,\theta(n))$. 
\end{proof}

\begin{lemma}\label{lemL}
Let $\theta$ satisfy \eqref{theta}.
The series 
\begin{equation*}
L=\sum_{n\ge 1} \frac{\chi(n)}{n} \prod_{p\le \theta(n)} \left(1-\frac{1}{p}\right)
\end{equation*}
converges and $0\le L \le 1$.
\end{lemma}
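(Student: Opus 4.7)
The series has nonnegative terms, so to prove convergence together with $L\le 1$ it suffices to bound the partial sums $S_N := \sum_{n \le N} \frac{\chi(n)}{n} \prod_{p \le \theta(n)}(1 - 1/p)$ by $1$ uniformly in $N$. The plan is to combine the functional equation from Lemma \ref{mainlemma} with the Mertens-type estimate from Lemma \ref{philemma}(ii) to extract each of the first $N$ terms as a main term and then send $x \to \infty$.

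First, I would dispose of the infinite values. If $\theta(n)=\infty$, then $\prod_{p\le \theta(n)}(1-1/p)=\prod_p(1-1/p)=0$, so such $n$ contribute nothing to $L$ and may be omitted throughout. Let $T_N$ denote the finite set of $n\le N$ with $\theta(n)<\infty$, noting that every such $n$ satisfies $\theta(n)\ge 2$ by \eqref{theta}.

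Next, fix $N$ and choose $x$ so large that $x/n\ge \theta(n)\ge 2$ for every $n\in T_N$; this is possible because $T_N$ is finite. By Lemma \ref{mainlemma}, discarding the nonnegative terms with $n\notin T_N$, we have
$$[x] \;\ge\; \sum_{n\in T_N} \chi(n)\,\Phi\!\left(\tfrac{x}{n},\theta(n)\right).$$
For each $n\in T_N$, Lemma \ref{philemma}(ii) applied with $u_n=\log(x/n)/\log\theta(n)$ gives
$$\Phi\!\left(\tfrac{x}{n},\theta(n)\right) \;=\; \frac{x}{n}\prod_{p\le \theta(n)}\!\left(1-\frac{1}{p}\right) + O\!\left(\frac{x/n}{\log\theta(n)}\,e^{-u_n/3}\right).$$
Summing over $n\in T_N$ and dividing by $x$ yields
$$\frac{[x]}{x} \;\ge\; \sum_{n\in T_N} \frac{\chi(n)}{n}\prod_{p\le \theta(n)}\!\left(1-\frac{1}{p}\right) + E_N(x),$$
where $E_N(x)$ is a finite sum of error terms.

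Finally, I would let $x\to\infty$ with $N$ fixed: the left-hand side tends to $1$, while for each fixed $n\in T_N$ the quantity $u_n\to\infty$, so every term of $E_N(x)$ tends to $0$. Since $T_N$ is finite this forces $E_N(x)\to 0$, and we conclude that $S_N\le 1$. Taking $N\to\infty$ completes the proof. The one place that requires care is the handling of $\theta(n)=\infty$, but this is immediate once one notices that those terms are identically zero; everything else reduces to finitely many applications of Lemma \ref{philemma}(ii), so no uniform-in-$N$ error bound is needed.
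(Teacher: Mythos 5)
Your proposal is correct and is essentially the paper's own argument: use the functional equation of Lemma \ref{mainlemma} to bound the partial sum $S_N$ by $[x]/x$ up to error, apply Lemma \ref{philemma}(ii) to each of the finitely many terms with $n \le N$, and let $x\to\infty$ with $N$ fixed. The paper states this more tersely (just noting $\lim_{x\to\infty}\Phi(x,y)/x=\prod_{p\le y}(1-1/p)$ and that partial sums are $\le 1$); your handling of $\theta(n)=\infty$ and of the fixed-$N$ limit simply makes explicit what the paper leaves implicit.
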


\begin{proof}
Lemma \ref{philemma} (ii) implies $\lim_{x\to \infty} \Phi(x,y)/x = \prod_{p\le y} (1-1/p)$.
From Lemma \ref{mainlemma}, we have
$$ \sum_{1\le n \le N} \frac{\chi(n)}{n} \prod_{p\le \theta(n)} \left(1-\frac{1}{p}\right) \le 1,$$
for every $N\ge 1$. The result now follows since the terms of the series are $\ge0$.
\end{proof}

\section{Proof of Theorems \ref{thmB} and \ref{cor}.}

\begin{proof}[Proof of Theorem \ref{thmB}]
Since $\Phi(x/n, \theta(n))=1$ when $n\le x < n\theta(n)$,
Lemma \ref{mainlemma} yields
\begin{equation*}
\begin{split}
[x] & = B(x) + \sum_{n\le x} \chi(n) \, \bigl(\Phi(x/n, \theta(n))-1\bigr) \\
& = B(x) + \sum_{n\theta(n) \le x}\chi(n) \, \bigl(\Phi(x/n, \theta(n))-1\bigr) \\
& = B(x) + x  \sum_{n\theta(n) \le x} \frac{\chi(n)}{n}  \prod_{p\le \theta(n)} \left(1-\frac{1}{p}\right)+ O(E_1(x)),
\end{split}
\end{equation*}
where
$$ E_1(x)= x \sum_{n\theta(n) \le x}\frac{\chi(n)}{n \log\theta(n)} \exp\left(-\frac{\log (x/n)}{3\log\theta(n)}\right),$$
by Lemma \ref{philemma} (ii).
Thus
$$[x]=B(x)+ Lx + O(E_1(x)+E_2(x)),$$
where
$$E_2(x)=  x\sum_{n\theta(n) > x}\frac{\chi(n)}{n \log\theta(n)}.$$
 This completes the proof of \eqref{thmB1}. The estimate \eqref{thmB2} follows, since $E_1(x)+E_2(x)=o(x)$ by
 Lemma \ref{lemL}.
\end{proof}

\begin{proof}[Proof of Theorem \ref{cor}]
(i) Assume $L<1$ so that  $ B(n)\ge c n $ for some $c>0$ and all $n \ge 1$. 
Let $g(n)=\log f(n)$ and assume $n g(n)$ is increasing. 
Partial summation applied twice yields
\begin{equation*}
\begin{split}
 \sum_{n\le N}  \frac{\chi(n)}{n g(n)} & = \frac{B(N)}{N g(N)} + \sum_{n\le N-1} B(n) \left(\frac{1}{n g(n)}-\frac{1}{(n+1)g(n+1)} \right) \\
& \ge  \frac{c N}{N g(N)} + \sum_{n\le N-1} c n  \left(\frac{1}{n g(n)}-\frac{1}{(n+1)g(n+1)} \right) \\
& = \sum_{n\le N}  \frac{c}{n g(n)},
 \end{split}
\end{equation*}
which grows unbounded as $N$ increases. But this is impossible, since
$$
\sum_{n\le N}  \frac{\chi(n)}{n g(n)}\le   \sum_{n\le N}  \frac{\chi(n)}{n \log \theta(n)}
\ll \sum_{n\le N}  \frac{\chi(n)}{n} \prod_{p\le \theta(n)} \left(1-\frac{1}{p}\right)
\le L \le 1,
$$
by Lemma \ref{lemL}. Thus $L=1$ and $B(x)=o(x)$.

(ii) Assume that $\theta(n) \ge f(n)\ge 2$, where $f(n) \gg P^+(n)$, and that
$$
\sum_{n\ge 1} \frac{1}{n \log f(n)}
$$
converges. 
Then there exists a $t\ge 1$ such that $t f(n) \ge P^+(n)$ and 
$$ L_{tf}\le \sum_{n\ge 1} \frac{1}{n} \prod_{p\le t f(n)} \left(1-\frac{1}{p}\right) < 1.$$
Theorem \ref{thmB} implies that $B_{tf}(x) \gg x$. 
According to the hypothesis, there is an $r\in \mathbb{N}$ such that $f(2^r n) \ge t f(n)$ for all $n\ge 1$.
Thus, if $n$ is counted in $B_{tf}(x/2^r)$, then $2^r n$ is counted in $B_f(x)$. This yields
$$B_\theta(x) \ge B_f(x) \ge B_{tf}(x/2^r) \gg_r x.$$

Part (iii) is an immediate consequence of Theorem \ref{thmB}.
\end{proof}

\section{Proof of Theorem \ref{thm4}.}

Throughout this section, we assume that $\theta$ satisfies \eqref{theta} and that
\begin{equation}\label{Best}
B(x) \ll \frac{x}{(\log x)^{\nu}} \qquad (x\ge 2),
\end{equation}
for some suitable $\nu \in [0,1]$, to be determined later. Clearly, $\nu=0$ is admissible.
All constants implied by $\ll$ and the big O notation may depend on $\theta$, and therefore on $a$, but are otherwise absolute.
Lemmas \ref{lem1} through \ref{lem4} correspond to Lemmas 5.3 through 5.7 of \cite{PDD}. The main difference is 
the assumption on the size of $B(x)$ for the purpose of estimating the error terms, for which we use \eqref{Best} here.  

\begin{lemma}\label{lem1}
For $x\ge e$ we have 
\begin{multline*}
B(x) = \\
x\ -\ x\sum_{n\theta(n)\le x} \frac{\chi(n)}{n} \, e^\gamma \omega\left(\frac{\log x/n}{\log \theta(n)}\right) 
\prod_{p\le \theta(n)} \left(1-\frac{1}{p}\right) + O\left(\frac{x}{(\log x)^{1+\nu}}\right)
\end{multline*}
\end{lemma}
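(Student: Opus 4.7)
The plan is to start from the functional identity in Lemma \ref{mainlemma}, namely $[x]=\sum_{n\le x}\chi(n)\Phi(x/n,\theta(n))$, and split the sum according to whether $n\theta(n)\le x$ or not. If $n\le x$ and $n\theta(n)>x$, then $x/n<\theta(n)$, so any $m\ge 2$ with $P^-(m)>\theta(n)$ already exceeds $x/n$; hence $\Phi(x/n,\theta(n))=1$ and such $n$ contribute precisely $B(x)-R_1(x)$, where $R_1(x):=\sum_{n\theta(n)\le x}\chi(n)$. Rearranging gives
\[
B(x)=[x]+R_1(x)-\sum_{n\theta(n)\le x}\chi(n)\Phi(x/n,\theta(n)).
\]
On the remaining sum, we apply Lemma \ref{philemma}(i) with $u=u_n:=\log(x/n)/\log\theta(n)\ge 1$. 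The Buchstab main term reproduces the sum on the right-hand side of the lemma, while the error terms $\theta(n)/\log\theta(n)$ and $(x/n)e^{-u_n/3}/(\log\theta(n))^2$ must be controlled; denote their totals by $R_2(x)$ and $R_3(x)$.

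Since $\theta(n)\asymp n^a$ with $a\ge 1$, the condition $n\theta(n)\le x$ forces $n\ll y:=x^{1/(a+1)}\le x^{1/2}$, and $\log\theta(n)\asymp\log n$. For $R_1$, the crude bound $R_1(x)\le B(y)\ll y/(\log y)^\nu$ is $\ll x^{1/2}/(\log x)^\nu$, well within the stated error. For $R_3$, I would dyadically decompose over $N\le n<2N$ with $N\ll y$; using the bound \eqref{Best} to count $n\in\mathcal{B}$ in each block, each block contributes $\ll (x/N)(\log N)^{-2}B(2N)e^{-u_N/3}\ll x(\log N)^{-(2+\nu)}e^{-u_N/3}$. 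The geometric growth of $u_N$ as $N$ decreases makes the sum dominated by the block $N\asymp y$, giving $R_3(x)\ll x/(\log x)^{2+\nu}$, which is better than required.

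The main obstacle is $R_2$, which is the only term with no built-in logarithmic gain: the naive bound $\theta(n)/\log\theta(n)\ll n^a/\log n$ summed trivially over $n\ll y$ produces $y^{a+1}/\log y\asymp x/\log x$, weaker than what we need. The fix is partial summation against $B(t)$, exploiting the assumption \eqref{Best}:
\[
R_2(x)\ll \frac{y^a}{\log y}\,B(y)+\int_2^{y}B(t)\,\frac{d}{dt}\!\left(\frac{t^a}{\log t}\right)dt\ll \frac{y^{a+1}}{(\log y)^{1+\nu}}\ll\frac{x}{(\log x)^{1+\nu}}.
\]
This is the step that genuinely uses the inductive strength of the hypothesis and is the mechanism that will let the lemma drive the bootstrap leading to Theorem \ref{thm4}. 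Combining the bounds for $R_1,R_2,R_3$ with $[x]=x+O(1)$ yields the stated formula.
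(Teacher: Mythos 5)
Your decomposition --- start from Lemma \ref{mainlemma}, note $\Phi(x/n,\theta(n))=1$ on $n\theta(n)>x$ so those $n$ give $B(x)-R_1(x)$, apply Lemma \ref{philemma}(i) to the range $n\theta(n)\le x$, and control the two error terms via the inductive hypothesis \eqref{Best} --- is exactly the paper's argument, and the bounds for $R_1$ and $R_2$ are handled correctly (the paper controls $R_2$ by pulling out the maximum of $n^a/\log 2n$ and multiplying by $B(y)$, which is the same use of \eqref{Best} as your partial summation). The one slip is in $R_3$: since $u_N\asymp(\log x-\log N)/(a\log N)$ increases only by $O(1/\log N)$ per dyadic step (not geometrically), roughly $\log x$ blocks with $N\in[y^{\varepsilon},y]$ all contribute comparably, so the correct total is $R_3\ll x/(\log x)^{1+\nu}$ rather than $x/(\log x)^{2+\nu}$; this is still precisely the error the lemma requires and matches the paper's bound.
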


\begin{proof}
As in the proof of Theorem \ref{thmB}, we have
\begin{equation}\label{lem0eq}
B(x)=[x]-\sum_{n\theta(n)\le x} \chi(n) \, (\Phi(x/n, \theta(n))-1).
\end{equation}
We apply Lemma \ref{philemma} (i) to estimate each occurrence of $\Phi(x/n, \theta(n))$ in \eqref{lem0eq}. 
The contribution from the error term $O(y/\log y)$ is
\begin{equation*}
\begin{split}
\ll \sum_{n\theta(n)\le x} \chi(n) \frac{\theta(n)}{\log\theta(n)}
 & \ll  \sum_{n^{1+a}\ll x} \chi(n)\frac{n^a}{\log 2n} \\
& \ll \frac{x^\frac{a}{1+a}}{\log x}\sum_{n\ll x^{1/(1+a)}} \chi(n) 
\ll \frac{x}{(\log x)^{\nu+1}},
\end{split}
\end{equation*}
by \eqref{Best}. 
For the contribution from the error term $O\left( \frac{xe^{-u/3}}{(\log y)^2}\right)$,
we split up the range of summation by powers of $2$ and use \eqref{Best} to get
\begin{equation*}
\begin{split}
 & \ll \sum_{n\theta(n) \le x} \chi(n) \frac{x}{n (\log \theta(n))^2} \exp\left(-\frac{\log x/n}{3\log \theta(n)}\right) \\
 & \ll \sum_{n\ll x^{1/(1+a)}} \frac{x}{n (\log 2n)^{2+\nu}} \exp\left(-\frac{\log 2x}{A\log 2n}\right) 
  \ll \frac{x}{(\log x)^{1+\nu}},
\end{split}
\end{equation*}
where $A$ is some suitable positive constant.
\end{proof}

\begin{lemma}\label{lem3}
For $x\ge e$ we have
\begin{equation*}
B(x)= x\sum_{n\ge 1} \frac{\chi(n)}{n \log \theta(n)} 
\left( e^{-\gamma} - \omega\left(\frac{\log x/n}{\log \theta(n)}\right)\right)
 + O\left(\frac{x }{(\log x)^{1+\nu}}\right)
\end{equation*}
\end{lemma}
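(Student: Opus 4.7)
The plan is to derive Lemma \ref{lem3} from Lemma \ref{lem1} in three moves: first extend the finite sum to a sum over all $n \ge 1$, then use the identity $L = 1$ (which applies here because $\theta(n) \asymp n^a$ falls under Theorem \ref{cor}(i), with $f(n) = c\,n^a$) in order to absorb the ``1'' in Lemma \ref{lem1} into the sum, and finally apply Mertens' theorem in the form $e^\gamma \prod_{p \le y}(1 - 1/p) = 1/\log y + O(1/\log^2 y)$ to convert the Euler product into $1/\log\theta(n)$.

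Since $\omega(u) = 0$ for $u < 1$, the sum in Lemma \ref{lem1} over $n\theta(n) \le x$ coincides with the sum over all $n \ge 1$. Writing $1 = L = \sum_{n\ge 1} (\chi(n)/n) \prod_{p \le \theta(n)}(1 - 1/p)$ and combining, Lemma \ref{lem1} takes the form
\[
B(x) = x \sum_{n \ge 1} \frac{\chi(n)}{n} \prod_{p \le \theta(n)}\!\left(1 - \frac{1}{p}\right)\bigl(1 - e^\gamma \omega(u_n)\bigr) + O\!\left(\frac{x}{(\log x)^{1+\nu}}\right),
\]
where $u_n = \log(x/n)/\log \theta(n)$. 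Factoring $1 - e^\gamma \omega(u_n) = e^\gamma(e^{-\gamma} - \omega(u_n))$ and applying Mertens produces exactly the main term of Lemma \ref{lem3}, plus a new error
\[
E_M := x \sum_{n \ge 1} \frac{\chi(n)\,|e^{-\gamma} - \omega(u_n)|}{n\,(\log \theta(n))^2},
\]
which I must bound by $O(x/(\log x)^{1+\nu})$.

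The bound on $E_M$ is the main technical step and the place where the assumption \eqref{Best} is used. I would split the sum at $u_n = 1$. For $u_n < 1$, i.e.\ $n\theta(n) > x$ (so $n \gg x^{1/(1+a)}$), we have $|e^{-\gamma} - \omega(u_n)| = e^{-\gamma}$, and the tail $\sum_{n > x^{1/(1+a)}} \chi(n)/(n (\log 2n)^2)$ is $\ll 1/(\log x)^{1+\nu}$ by Abel summation using $B(t) \ll t/(\log t)^\nu$. For $u_n \ge 1$, Lemma \ref{omega}(ii) gives $|e^{-\gamma} - \omega(u_n)| \le 1/\Gamma(u_n + 1)$. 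I would then apply a dyadic decomposition over $n \in [N/2, N]$; since $u_n$ is essentially constant on such a range, and $\#\{n \in [N/2, N] : \chi(n) = 1\} \le B(N) \ll N/(\log N)^\nu$, each dyadic block contributes $\ll 1/(\Gamma(u_N+1)\,(\log N)^{2+\nu})$, with $u_N \asymp \log(x/N)/(a \log N)$. Summing over dyadic ranges and changing variable via $\log N \asymp \log x/(1 + au)$ reduces the total to $(x/(\log x)^{1+\nu}) \int_1^\infty (1 + au)^\nu/\Gamma(u+1)\,du$, and the rapid decay of $1/\Gamma(u+1)$ makes this integral convergent, yielding $E_M \ll x/(\log x)^{1+\nu}$ as required.
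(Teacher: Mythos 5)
Your proof is correct, and it reaches the same main-term identity as the paper, but the error analysis is handled by a genuinely different route, so a comparison is worthwhile.

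You and the paper agree on the first two moves: extend the range of summation using $\omega(u)=0$ for $u<1$, insert $L=1$ to absorb the constant, and arrive at
\[
B(x)=x\sum_{n\ge1}\frac{\chi(n)}{n}\prod_{p\le\theta(n)}\Bigl(1-\frac1p\Bigr)\bigl(1-e^\gamma\omega(u_n)\bigr)+O\Bigl(\frac{x}{(\log x)^{1+\nu}}\Bigr).
\]
From here the paper invokes a PNT-strength Mertens estimate, $\prod_{p\le\theta(n)}(1-1/p)=\frac{e^{-\gamma}}{\log\theta(n)}\bigl(1+O((\log n)^{-4})\bigr)$, and splits the sum at $\log n=\sqrt{\log x}$: for small $n$ the factor $1-e^\gamma\omega(u_n)$ is $O(1/\Gamma(u_n+1))$ and the whole contribution is $\ll x\exp(-\sqrt{\log x})$, while for large $n$ the strong Mertens error alone gives $x\sum_{\log n>\sqrt{\log x}} n^{-1}(\log n)^{-5}\ll x(\log x)^{-2}\le x(\log x)^{-(1+\nu)}$, using only $\chi(n)\le1$ and $\nu\le1$. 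The paper thus never re-invokes the density hypothesis \eqref{Best} inside this lemma.

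You instead use the first-order Mertens error $O(1/\log\theta(n))$, which yields the weaker residual $E_M=x\sum\chi(n)|e^{-\gamma}-\omega(u_n)|\,n^{-1}(\log\theta(n))^{-2}$, and then recover $x(\log x)^{-(1+\nu)}$ by two devices the paper does not need here: \eqref{Best} via the dyadic density bound $\#\{n\in[N/2,N]:\chi(n)=1\}\ll N(\log N)^{-\nu}$, and the change of variables $\log N\asymp\log x/(1+au)$ turning the dyadic sum into a convergent $\int(1+au)^\nu/\Gamma(u+1)\,du$. Your tail $u_n<1$ is correctly handled by Abel summation with $B(t)\ll t(\log t)^{-\nu}$. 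Both arguments are sound; the paper's is shorter and avoids reusing \eqref{Best}, while yours gets by with the elementary Mertens theorem at the cost of a more delicate error estimate. One small point of rigor for your version: on a dyadic block $[N/2,N]$ the bound $|e^{-\gamma}-\omega(u_n)|\le1/\Gamma(u_n+1)$ should be applied with $u_n$ evaluated at $n=N$, since $u_n$ is decreasing in $n$ and $1/\Gamma$ is decreasing for large argument; this is what your ``$u_N$'' implicitly does, but it is worth saying explicitly.
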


\begin{proof}
We have $L=1$ by Theorem \ref{cor}.
Since $\omega(u)=0$ for $u<1$, Lemma \ref{lem1} shows that 
\begin{equation*}
B(x) = x\sum_{n\ge 1} \frac{\chi(n)}{n} 
\prod_{p\le \theta(n)} \left(1-\frac{1}{p}\right)
\left(1- e^\gamma \omega\left(\frac{\log x/n}{\log \theta(n)}\right)\right)
 + O\left(\frac{x }{(\log x)^{1+\nu}}\right)
\end{equation*}
The contribution from $n$ with $\log n \le \sqrt{\log x}$ is 
$\ll x \exp \left(-\sqrt{\log x}\right)$. 
For those $n$ for which $\log n > \sqrt{\log x}$, we use the estimate
$$ \prod_{p\le \theta(n)} \left(1-\frac{1}{p}\right) = \frac{e^{-\gamma}}{\log \theta(n)} 
\left(1+O\left(\frac{1}{(\log n)^4}\right)\right).$$
The contribution from the error term is 
$$\ll x \sum_{\log n > \sqrt{\log x}} \frac{1}{n (\log n)^5} \ll \frac{x}{(\log x)^2} \ll \frac{x}{(\log x)^{1+\nu}}.$$
\end{proof}

\begin{lemma}\label{lem3b}
For $x\ge e$ we have
\begin{equation*}
B(x)= x\sum_{n\ge 2} \frac{\chi(n)}{a n \log n} 
\left( e^{-\gamma} - \omega\left(\frac{\log x/n}{a \log n}\right)\right)
 + O\left(\frac{x }{(\log x)^{1+\nu}}\right).
\end{equation*}
\end{lemma}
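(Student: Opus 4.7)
The plan is to start from Lemma \ref{lem3} and, using the hypothesis $\theta(n)\asymp n^a$ (hence $\log\theta(n)=a\log n+O(1)$ for $n\ge 2$), replace $\log\theta(n)$ by $a\log n$ in both the prefactor $1/\log\theta(n)$ and the argument $u_1:=\log(x/n)/\log\theta(n)$ of $\omega$. The goal is to show that the difference of the two sums is absorbed by $O(1/(\log x)^{1+\nu})$.

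First I would handle the $n=1$ term of Lemma \ref{lem3}, which is absent from Lemma \ref{lem3b}: if $\theta(1)=\infty$ the term vanishes; otherwise $u_1=\log x/\log\theta(1)\to\infty$ and Lemma \ref{omega}(ii) gives $|e^{-\gamma}-\omega(u_1)|\le 1/\Gamma(u_1+1)$, which is super-polynomially small in $\log x$ and hence negligible.

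For $n\ge 2$, I would write the per-$n$ difference as $T_1(n)+T_2(n)$, where
\[
T_1(n)=\Bigl(\frac{1}{\log\theta(n)}-\frac{1}{a\log n}\Bigr)\bigl(e^{-\gamma}-\omega(u_1)\bigr),\qquad
T_2(n)=\frac{\omega(u_2)-\omega(u_1)}{a\log n},
\]
with $u_2=\log(x/n)/(a\log n)$. From $\log\theta(n)=a\log n+O(1)$ one gets $1/\log\theta(n)-1/(a\log n)=O(1/(\log n)^2)$ and $|u_1-u_2|=O(\log(x/n)/(\log n)^2)$. I would then split the $n\ge 2$ range into three pieces: (i) $n\le \exp(\log x/(a\log\log x))$, where $u_1\gg\log\log x$ so the factor $1/\Gamma(u_1+1)$ beats any power of $\log x$; (ii) the intermediate range $\exp(\log x/(a\log\log x))\le n\le Cx^{1/(a+1)}$, on which both $u_1$ and $u_2$ stay on the same side of the jump of $\omega$ at $1$ (so Lemma \ref{omega}(i) gives $|\omega(u_2)-\omega(u_1)|\le|u_1-u_2|$) and on which the partial-summation bound $\sum_{n>N}\chi(n)/(n(\log n)^k)\ll 1/(\log N)^{k-1+\nu}$, derived from the hypothesis $B(t)\ll t/(\log t)^{\nu}$, does the work with $N\asymp\exp(\log x/(a\log\log x))$ for $T_1$ (using $k=2$) and $N$ near $x^{1/(a+1)}$ for the $\log(x/n)/(\log n)^3$ bound on $T_2$ (using $k=3$ after breaking $\log(x/n)\le\log x$ at $n=\sqrt x$); (iii) $n>Cx^{1/(a+1)}$, where $\omega(u_1)=0$ or $\omega(u_2)=0$ and the same tail estimate delivers $O(1/(\log x)^{1+\nu})$.

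The main obstacle is the jump discontinuity of $\omega$ at $u=1$: if $u_1<1\le u_2$ (or vice versa), then $|\omega(u_2)-\omega(u_1)|$ is of order $1-e^{-\gamma}$ rather than $O(|u_1-u_2|)$, so Lemma \ref{omega}(i) gives nothing. However, such $n$ must satisfy $\log(x/n)-a\log n\in[0,O(1)]$, which forces $n$ into a multiplicative shell of constant width around $x^{1/(a+1)}$. In this shell $\chi(n)\ne 0$ on at most $B(x^{1/(a+1)})\ll x^{1/(a+1)}/(\log x)^{\nu}$ integers, while the prefactor $1/(a n\log n)$ is $\asymp 1/(x^{1/(a+1)}\log x)$, so the total contribution is $O(1/(\log x)^{1+\nu})$. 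This is the step that genuinely uses $\theta(n)\asymp n^a$, not merely $\log\theta(n)=a\log n+O(1)$: the implied constant determines the shell width. Combining the three ranges with this boundary estimate concludes the proof.
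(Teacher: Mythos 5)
Your decomposition $T_1+T_2$, your handling of the $n=1$ term, and your treatment of the jump of $\omega$ at $u=1$ via the multiplicative shell around $x^{1/(a+1)}$ are all correct and hit the genuine difficulty (the paper itself delegates this to [PDD, Lemma 5.6]). However, the estimate for range (ii) does not close with a single cut at $N\asymp\exp(\log x/(a\log\log x))$.

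For $T_1$ you use the trivial bound $|e^{-\gamma}-\omega(u_1)|\ll 1$ on all of range (ii) together with
$$
\sum_{n>N}\frac{\chi(n)}{n(\log n)^2}\ll\frac{1}{(\log N)^{1+\nu}}.
$$
With $\log N=\log x/(a\log\log x)$, the right-hand side is $(a\log\log x)^{1+\nu}/(\log x)^{1+\nu}$, which exceeds the target $O(1/(\log x)^{1+\nu})$ by a factor $(\log\log x)^{1+\nu}$. The same defect appears, more severely, in your $T_2$ bound: for $a>1$, all of range (ii) satisfies $n\le Cx^{1/(a+1)}<\sqrt{x}$, so the split at $\sqrt{x}$ is vacuous, you must use $\log(x/n)\le\log x$ throughout, and the tail at $N\asymp\exp(\log x/(a\log\log x))$ with $k=3$ yields $\log x\cdot(\log\log x/\log x)^{2+\nu}=(\log\log x)^{2+\nu}/(\log x)^{1+\nu}$.

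The missing ingredient is that the factorial decay from Lemma~\ref{omega} must be used \emph{throughout} range (ii), not only in range (i). Break (ii) into unit shells $S_j=\{n : j\le u_1<j+1\}$; on $S_j$ one has $\log n\asymp\log x/j$, so the tail bound there contributes $\ll (j/\log x)^{1+\nu}$ for $T_1$, while Lemma~\ref{omega}(ii) gives the extra factor $1/\Gamma(j+1)$. Summing $\sum_{j\ge1}j^{1+\nu}/\Gamma(j+1)\ll 1$ recovers $O(1/(\log x)^{1+\nu})$; the analogous computation for $T_2$, using $|\omega'(u)|\le 1/\Gamma(u+1)$ from Lemma~\ref{omega}(i) to supply the factorial decay in the Lipschitz constant, likewise closes. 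Your boundary-shell argument then handles the $j$'s near the jump. As written, though, the single-cut version loses $\log\log x$ powers and does not prove the stated error term.
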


\begin{proof}
Since $\theta(n) \asymp n^a$, we have $\log \theta(n) = a\log n + O(1)$. Inserting this estimate for 
each instance of $\log \theta(n) $ in Lemma \ref{lem3}, yields the desired result. 
For more details on the calculations see \cite[Lemma 5.6]{PDD}, where the case $a=1$ is treated.
\end{proof}

\begin{lemma}\label{lem4}
For $x\ge e$ we have
\begin{equation*}
B(x)= x\int_{e}^{\infty} \frac{B(y)}{a y^2 \log y} 
\left( e^{-\gamma} - \omega\left(\frac{\log x/y}{a\log y}\right)\right) \, \mathrm{d}y
 + O\left(\frac{x }{(\log x)^{1+\nu}}\right).
\end{equation*}
\end{lemma}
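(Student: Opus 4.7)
The plan is to pass from the sum in Lemma \ref{lem3b} to the integral in Lemma \ref{lem4} by Abel summation against $dB(y)$. Set $u(y):=\log(x/y)/(a\log y)$ and $h(y):=(e^{-\gamma}-\omega(u(y)))/(ay\log y)$, so that the sum equals $\sum_{n\ge 2}\chi(n)h(n)$. Since $B$ jumps at integers and $h$ has a single jump at $y_0:=x^{1/(a+1)}$, where $u=1$, Abel summation yields
$$\sum_{n\ge 2}\chi(n)h(n)=\lim_{N\to\infty}B(N)h(N)-h(2)-\int_2^{\infty}B(y)h'(y)\,dy-\Delta,$$
where $\Delta=B(\lfloor y_0\rfloor)/(ay_0\log y_0)$ captures the jump of $\omega$, and $h'$ denotes the classical (a.e.) derivative. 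The boundary at infinity vanishes since $h(y)=e^{-\gamma}/(ay\log y)$ for $y>x$ and $B(y)\ll y/(\log y)^{\nu}$; the constant $h(2)$ is $O(1)$; and $|\Delta|\ll 1/(\log x)^{1+\nu}$.

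Using $u'(y)=-\log x/(ay(\log y)^2)$, the product rule gives
$$-h'(y)=\frac{(e^{-\gamma}-\omega(u))(\log y+1)}{a(y\log y)^2}-\frac{\omega'(u)\log x}{a^2y^2(\log y)^3}.$$
Rewriting $(\log y+1)/\log y=1+1/\log y$ separates off the desired main term $\int_e^\infty B(y)(e^{-\gamma}-\omega(u))/(ay^2\log y)\,dy$, leaving an error integral with an extra factor $1/\log y$ together with an error integral involving $\omega'$. The short range $[2,e]$ contributes $O(1)$.

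To bound each error by $O(1/(\log x)^{1+\nu})$, I would split at $y_0$. For $y>y_0$ one has $u<1$, so $\omega'(u)=0$ and $|e^{-\gamma}-\omega(u)|\le e^{-\gamma}$; the tail is then bounded by $\int_{y_0}^\infty dy/(y(\log y)^{2+\nu})\ll 1/(\log x)^{1+\nu}$, using $B(y)\ll y/(\log y)^\nu$. For $y\le y_0$, Lemma \ref{omega} supplies $|e^{-\gamma}-\omega(u)|,|\omega'(u)|\le 1/\Gamma(u+1)$; the substitution $s=\log x/(a\log y)$, so $u=s-1/a$, converts both error integrals into integrals over $s\in[(a+1)/a,\log x/a]$, with a prefactor $(\log x)^{-(1+\nu)}$ emerging from the $(\log y)^{-(2+\nu)}$ weight against the Jacobian, and the super-exponential decay of $1/\Gamma$ renders the remaining $s$-integrals absolutely bounded uniformly in $x$. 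Multiplying the resulting identity by $x$ then recovers Lemma \ref{lem4}. The step I expect to require the most care is this last one on $[e,y_0]$: a face-value estimate there only gives $O(1)$, and the full gain of $(\log x)^{-(1+\nu)}$ relies on combining the change of variables with the rapid decay of $1/\Gamma(u+1)$.
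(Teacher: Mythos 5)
Your approach is exactly the paper's: the paper's entire proof is ``apply partial summation to the sum in Lemma \ref{lem3b},'' and your proposal fills in the details of that partial summation correctly, including the observation that $\omega(u(y))$ jumps at $y_0=x^{1/(a+1)}$, the computation of $-h'(y)$, and the change of variables $s=\log x/(a\log y)$ that converts the super-exponential decay of $1/\Gamma(u+1)$ into a uniformly bounded $s$-integral with the prefactor $(\log x)^{-(1+\nu)}$.

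One small imprecision: you dismiss the lower boundary term $h(2)$ and the range $[2,e]$ as $O(1)$, but after the final multiplication by $x$ this would give an unacceptable $O(x)$, not $O\bigl(x/(\log x)^{1+\nu}\bigr)$. The correct observation is that these pieces are in fact super-polynomially small in $\log x$: for $y$ in a bounded range and $x$ large one has $u(y)\gg\log x$, so $|e^{-\gamma}-\omega(u(y))|$ and $|\omega'(u(y))|$ are $\le 1/\Gamma(u(y)+1)$ by Lemma \ref{omega}, which crushes the spare $\log x$ from the $\omega'$ term and gives far better than $(\log x)^{-(1+\nu)}$; for bounded $x\ge e$ the lemma is trivial by adjusting the implied constant. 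With that tightening, the argument is complete and coincides with the paper's.
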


\begin{proof}
This follows from applying partial summation to the sum in Lemma \ref{lem3b}. 
\end{proof}

From Lemma \ref{lem4} we have, for $x\ge e$,
\begin{equation}\label{inteq}
B(x)= x \, \alpha - x \int_{e}^{\infty} \frac{B(y)}{ay^2 \log y} 
\, \omega\left(\frac{\log x/y}{a\log y}\right)\mathrm{d}y
 + O\left(\frac{x }{(\log x)^{1+\nu}}\right),
\end{equation}
where
$$ \alpha:=e^{-\gamma} \int_{e}^{\infty} \frac{B(y)}{a y^2 \log y} \, \mathrm{d}y . $$
For $x\ge e$ let $z\ge 0$ be given by  
$$z=\log \log(x)$$
and let
$$ G_\theta(z) := \frac{B\left(\exp(e^z)\right)}{\exp(e^z)}  = \frac{B(x)}{x} .$$
Dividing \eqref{inteq} by $x$ and changing variables in the integral
via $u=\log\log y$ we get, for $z\ge 0$,
\begin{equation}\label{conv}
\begin{split}
G_\theta(z) & =  \alpha - \frac{1}{a} \int_{0}^{z} G_\theta(u) \, \omega\left((e^{z-u} -1)/a\right) \, \mathrm{d}u +E_\theta(z) \\
       & = \alpha -\frac{1}{a}  \int_{0}^{z} G_\theta(u) \, \Omega_a(z-u)  \, \mathrm{d}u +E_\theta(z), 
\end{split} 
\end{equation}
where 
\begin{equation}\label{Error}
E_\theta(z) \ll  e^{-(1+\nu)z}
\end{equation}
and
$$ \Omega_a(u):= \omega\left((e^{u} -1)/a\right) . $$
Equation \eqref{conv} leads to the equation of Laplace transforms
$$ \widehat{G}_\theta(s) = \frac{\alpha}{s} -\frac{1}{a} \widehat{G}_\theta(s) \, \widehat{\Omega}_a(s) + \widehat{E}_\theta(s) \qquad (\re s >0),$$
which we solve for $\widehat{G}_\theta(s)$ to get
$$\widehat{G}_\theta(s) = \frac{\alpha}{s (1+\frac{1}{a}\widehat{\Omega}_a(s))} + \frac{\widehat{E}_\theta(s)}{1+\frac{1}{a}\widehat{\Omega}_a(s)}
\qquad (\re s >0).$$
 Let $F_a(z)$ be given by
 \begin{equation}\label{FaDef}
 F_a(z) =  1 - \frac{1}{a} \int_{0}^{z} F_a(u)  \, \Omega_a(z-u) \, \mathrm{d}u.
 \end{equation}
 Equation \eqref{FaDef} is an error-free, rescaled version of equation \eqref{conv},
 which depends on $a$, but does not involve $\theta$.
 Note that the upper limit of the integral could be replaced by $z-\log(a+1)$, since $\omega(t)=0$ for $t<1$.
 Thus $F_a(z)=1$ for $0\le z \le \log(a+1)$, and for $z>\log(a+1)$, $F_a(z)$ is determined by the values of 
 $F_a(u)$ for $u \in [0, z-\log(a+1)]$. Hence  \eqref{FaDef} defines the continuous function $F_a(z)$ for $z\ge 0$.
 As with $G_\theta(z)$, we find that the Laplace transform of $F_a(z)$ is given by
 $$  \widehat{F}_a(s) = \frac{1}{s (1+\frac{1}{a}\widehat{\Omega}_a(s))}  \qquad (\re s >0),$$
 and therefore,
 \begin{equation}\label{LaplaceGF}
     \widehat{G}_\theta(s) = \alpha \widehat{F}_a(s) + s \widehat{F}_a(s) \widehat{E}_\theta(s).
\end{equation}
The Laplace transform of $ \Omega_a(u)$, defined for $\re(s)>0$, is given by
\begin{equation*}
\begin{split}
 \widehat{\Omega}_a(s) & = \int_0^\infty \omega((e^u-1)/a) \, e^{-us} du = a \int_0^\infty \omega(t) \frac{dt}{(at+1)^{s+1}} \\
 &= \frac{e^{-\gamma}}{s(a+1)^s} + a\int_1^\infty \bigl( \omega(t) - e^{-\gamma}\bigr)   \frac{dt}{(at+1)^{s+1}} .
 \end{split}
 \end{equation*}
 The last equation extends $ \widehat{\Omega}_a(s)$ to a meromorphic function on $\mathbb{C}$ with a simple pole at $s=0$. 
 We will need to investigate the location of zeros of the entire function $g_a(s)$, defined by
 \begin{equation*}
\begin{split}
g_a(s) & = 1/  \widehat{F}_a(s) =s \left(1+\frac{1}{a}\widehat{\Omega}_a(s)\right) \\
& =s + \frac{e^{-\gamma}}{a (a+1)^s} 
+s \int_1^\infty \bigl( \omega(t) - e^{-\gamma}\bigr)   \frac{dt}{(at+1)^{s+1}}.
 \end{split}
 \end{equation*}

\pagebreak

\begin{lemma}\label{lambdamu}
For $a\ge 1$, $g_a(s)$ has a simple real zero at $s=-\lambda_a$, where $\lambda_1=1$, 
 $\lambda_a \in (0,1)$ for $a>1$,
$$
  \lambda_a>\frac{e^{-\gamma}}{a}\left(1+\frac{e^{-\gamma}\log (a+1)}{a}-\frac{0.16}{a}\right) =: l_a
$$
and
$$
\lambda_a< \frac{e^{-\gamma}}{a}\left(1+\frac{e^{-\gamma}\log (a+1)}{a}+\frac{\log^2(a+1)}{a^2}\right)=: u_a.
$$
For $a\ge 1$,  $g_a(s)$ has no other zero with  $\re(s)\ge -1-u_a$.
\end{lemma}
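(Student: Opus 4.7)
My plan is to localize the real zero, establish the quantitative bounds on $\lambda_a$ together with simplicity, and then extend the zero-free property to a complex half-plane.

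First, I would evaluate $g_a$ at $s=0$ and $s=-1$. We have $g_a(0)=e^{-\gamma}/a>0$ directly. At $s=-1$ the factor $(at+1)^{s+1}$ collapses to $1$, giving
$$g_a(-1) = -1 + \frac{e^{-\gamma}(a+1)}{a} - \int_1^\infty (\omega(t)-e^{-\gamma})\,dt.$$
The case $a=1$ of \cite[Theorem 1.2]{PDD}, which gives $\lambda_1=1$, is precisely the statement $g_1(-1)=0$, equivalent to the identity $\int_1^\infty(\omega(t)-e^{-\gamma})\,dt=2e^{-\gamma}-1$. Substituting back yields $g_a(-1)=e^{-\gamma}(1-a)/a$, which is $0$ at $a=1$ and strictly negative for $a>1$. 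Together with $g_a(0)>0$, the intermediate value theorem then produces a real zero $-\lambda_a\in(-1,0)$ for every $a>1$, and $\lambda_1=1$ directly.

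For the quantitative bounds $l_a<\lambda_a<u_a$, I would substitute $s=-l_a$ and $s=-u_a$ into $g_a$ and verify the sign pattern $g_a(-l_a)>0>g_a(-u_a)$; monotonicity of $g_a$ across this interval, combined with the zero already located, pins $\lambda_a$ down to $(l_a, u_a)$. The needed computation relies on the Taylor expansion around $s=0$,
$$g_a(s) = \frac{e^{-\gamma}}{a} + s\left(1 - \frac{e^{-\gamma}\log(a+1)}{a} + I_a\right) + R_a(s),$$
where $I_a=\int_1^\infty(\omega(t)-e^{-\gamma})\,dt/(at+1)$ and $R_a(s)$ is a quadratic remainder obtained from the power series of $(a+1)^{-s}$ and $(at+1)^{-s-1}$. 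I would bound $I_a$ and $R_a(s)$ using Lemma \ref{omega}(ii), refined for $t\in[1,5]$ by the exact expressions for $\omega$, for $t\in[5,10.3355]$ by the Cheer--Goldston table, and beyond by Lemma \ref{omega}(ii) again, as described just before that lemma. The numerical constants $0.16$ and $\log^2(a+1)/a^2$ emerge from this bookkeeping, and the same expansion gives $g_a'(-\lambda_a)=1+O(1/a)$, bounded away from zero for $a\geq 1$, which proves simplicity.

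The hardest part is ruling out other zeros in the half-plane $\re(s)\geq -1-u_a$. My plan is to split it along an imaginary-part threshold $T$. For $|\im s|\geq T$, integration by parts in the integral defining $\widehat{\Omega}_a(s)$, using $|\omega'(t)|\leq 1/\Gamma(t+1)$ from Lemma \ref{omega}(i) and the piecewise smoothness of $\omega-e^{-\gamma}$, yields $\widehat{\Omega}_a(s)=O(1/|s|)$ uniformly for $\re s\in[-1-u_a, A]$, so $g_a(s)=s(1+o(1))$ is nonzero. For bounded $|\im s|$ with $\re s\in[-1-u_a, 0]$, I would follow the strategy of the corresponding lemma in \cite{PDD}: strict monotonicity of $g_a$ on the real segment, pushed down to $s=-1-u_a$ via the Taylor-type bounds above, rules out further real zeros, while uniform lower bounds on $|\im g_a(s)|$ off the real axis, obtained again from the truncated expansion and Lemma \ref{omega}, confine the zero set to $\{-\lambda_a\}$. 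Propagating the constants uniformly in $a\geq 1$ so that they align with $u_a$ is where the bulk of the technical work concentrates.
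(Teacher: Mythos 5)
Your evaluation at $s=0$ and $s=-1$, the identity $\int_1^\infty(\omega(t)-e^{-\gamma})\,dt = 2e^{-\gamma}-1$, and the resulting intermediate-value argument for a zero in $(-1,0)$ match the paper exactly. The Taylor-expansion plan for pinning down $l_a<\lambda_a<u_a$ is a genuinely different route from the paper, which instead brackets $g_a(s)$ between explicit envelope functions $g_a^\pm(s)$ (built from the bounds $0.11/(ae^\gamma+1)^{s+1}<I_a(s)<0.16/(a+1)^{s+1}$ for $s\in[-1,0]$) and then checks $g_a^-(-l_a)>0$, $g_a^+(-u_a)<0$ by a computer-assisted calculus exercise. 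Your expansion could plausibly deliver the same sign check, but note that the paper does \emph{not} rely on $g_a$ being monotone on the interval to pass from ``a zero lies in $(-u_a,-l_a)$'' to ``the zero is in $(-u_a,-l_a)$''; it instead deduces this from the uniqueness statement proved separately. Your appeal to ``monotonicity of $g_a$ across this interval'' is therefore an unproven extra hypothesis.

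The real gap is in the zero-free region. Your proposal splits into large and small $|\im s|$; the large-$|\im s|$ part via integration by parts is fine and in fact mirrors Lemma \ref{ga}. But for bounded $|\im s|$ you invoke two unproven claims: strict monotonicity of $g_a$ on the real segment down to $\re s = -1-u_a$, and ``uniform lower bounds on $|\im g_a(s)|$ off the real axis.'' The second claim cannot be made uniform: $\im g_a(s)\to 0$ as $\im s\to 0$ (indeed $g_a$ is real on the real axis), so any lower bound must degenerate near the real axis, precisely where you most need to exclude zeros. Patching this would require a delicate $\delta$-strip argument whose constants would themselves need to be propagated uniformly in $a$, and you give no mechanism for doing so. The paper sidesteps all of this with a single Rouch\'e comparison: it writes $h_a(s)=g_a(s)-s$, sets $\mu_a=1+u_a$, and shows $|h_a(s)|<|s|$ on the boundary of a large rectangle $[-\mu_a,\alpha]\times[-T,T]$, splitting into $a\ge 10$ (where $|h_a(s)|\le 0.73+0.21|s|<|s|$ for $|s|>0.93$) and $1\le a\le 10$ (where the bound $2.4+0.5|s|$ leaves a finite segment on $\re s=-\mu_a$ that is checked numerically). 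Rouch\'e then gives \emph{exactly one} zero with $\re s\ge -\mu_a$, which simultaneously yields uniqueness and simplicity. Your separate simplicity argument via $g_a'(-\lambda_a)=1+O(1/a)$ is also shaky: for $a$ near $1$ the $O(1/a)$ term is $O(1)$ and does not obviously keep the derivative away from zero without further quantitative input.

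You would do better to replace the monotonicity and imaginary-part arguments with the Rouch\'e comparison of $g_a$ against $s$ on a rectangle of half-width $\mu_a=1+u_a$; that single estimate handles uniqueness, simplicity, and the confinement of the zero to $(-u_a,-l_a)$ all at once.
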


\begin{proof}
We have $g_a(0) = e^{-\gamma}/a >0$ and 
$$g_a(-1)=-1+e^{-\gamma}(1+1/a) - (2e^{-\gamma}-1)=e^{-\gamma}(1/a -1).$$ 
Thus $g_1(-1)=0$ and $\lambda_1=1$.
Assume $a>1$. We have $g_a(-1)<0$, so that $g_a(s)$ has a zero in the interval $(-1,0)$,
since $g_a(s)$ is real if $s$ is real.
For $s\in [-1,0]$ we have
\begin{equation*}
\begin{split}
 I_a(s) & :=\int_1^\infty \bigl( \omega(t) - e^{-\gamma}\bigr)   \frac{dt}{(at+1)^{s+1}} \\
 & \le   \frac{1}{(a+1)^{s+1}} \int_1^\infty | \omega(t) - e^{-\gamma}|  dt 
<  \frac{0.16}{(a+1)^{s+1}},
\end{split}
\end{equation*}
by numerical computation of the last integral, and
\begin{equation*}
\begin{split}
I_a(s)
& = \int_1^{e^\gamma} \bigl( t^{-1} - e^{-\gamma}\bigr)   \frac{dt}{(at+1)^{s+1}} 
+ \int_{e^\gamma}^\infty \bigl( \omega(t) - e^{-\gamma}\bigr)   \frac{dt}{(at+1)^{s+1}} \\
& \ge  \frac{1}{(ae^\gamma+1)^{s+1}} \bigl(\gamma - e^{-\gamma}(e^\gamma -1)\bigr)
-  \frac{1}{(ae^\gamma+1)^{s+1}}  \int_{e^\gamma}^\infty | \omega(t) - e^{-\gamma}|  dt \\
& \ge  \frac{\gamma -1 + e^{-\gamma} - 0.021}{(ae^\gamma+1)^{s+1}} 
> \frac{0.11}{(ae^\gamma+1)^{s+1}} .
\end{split}
\end{equation*}
Since $s\le 0$,
$$ g_a(s) <s + \frac{e^{-\gamma}}{a (a+1)^s} +s\frac{0.11}{(ae^\gamma+1)^{s+1}} =: g^+_a(s),$$
and
$$ g_a(s) >s + \frac{e^{-\gamma}}{a (a+1)^s}+s \frac{0.16}{(a+1)^{s+1}}=:g^-_a(s).  $$
Hence
$ g_a(-l_a)>g^-_a(-l_a)$ and $g_a(-u_a)<g^+_a(-u_a)$ if $u_a \le 1$.
A calculus exercise, made easier with the help of a computer, shows that $g^-_a(-l_a)>0$ and $g^+_a(-u_a)<0$ for $a\ge 1$. 
Hence $g_a(s)$ has a zero in $(-1,0)\cap (-u_a,-l_a)$.

It remains to show that, for $a\ge 1$,  $g_a(s)$ has no other zero with  $\re(s)\ge -1-u_a$.
We write
$$h_a(s)=g_a(s)-s, \qquad \mu_a=u_a+1.$$
For $\re(s)\ge -\mu_a$, 
$$|h_a(s)| \le  \frac{e^{-\gamma}}{a (a+1)^{-\mu_a}} 
+|s| \int_1^\infty \bigl| \omega(t) - e^{-\gamma}\bigr|   \frac{dt}{(at+1)^{-\mu_a+1}} .$$
Since $\mu_a>1$ and $at+1<(a+1)t$ for $t\ge 1$, the last integral is
$$ \le  (a+1)^{\mu_a-1} \int_1^\infty \bigl| \omega(t) - e^{-\gamma}\bigr| t^{\mu_a-1}  dt.$$
First assume $a\ge 10$. We have
$$\mu_a-1 \le  \mu_{10}-1<0.1,$$
$$   (a+1)^{\mu_a-1}\le  (10+1)^{\mu_{10}-1}< 1.2,$$
and
$$   \frac{e^{-\gamma}}{a (a+1)^{-\mu_a}} \le  \frac{e^{-\gamma}}{ 10(10+1)^{-\mu_{10}}} < 0.73.$$
Thus, for $a\ge 10$,
$$|h_a(s)| \le 0.73 + 1.2 |s|   \int_1^\infty \bigl| \omega(t) - e^{-\gamma}\bigr| t^{0.1}  dt<0.73+0.21|s|,$$
by numerical computation of the integral.
Hence we have $|h_a(s)|<|s|$ provided $|s|>0.93$.
On the boundary of the rectangle $R$ with corners $-\mu_a \pm iT$, $\alpha \pm iT$, where $\alpha, T \ge 5$, 
we have $|s|>0.93$, since $\mu_a>1$. 
Rouch\'e's theorem implies that $g_a(s)$ and $s$ have the same number of zeros, i.e. exactly one zero, with $\re(s)\ge -\mu_a$.

Next, assume $1\le a \le 10$. 
We have
$$\mu_a-1 \le  \mu_{1}-1<1.1,$$
$$   (a+1)^{\mu_a-1}\le  (1+1)^{\mu_{1}-1}< 2.1,$$
and
$$   \frac{e^{-\gamma}}{a (a+1)^{-\mu_a}} \le  \frac{e^{-\gamma}}{ (1+1)^{-\mu_{1}}} < 2.4.$$
Thus, for $1\le a\le 10$,
$$|h_a(s)| \le 2.4 + 2.1 |s|   \int_1^\infty \bigl| \omega(t) - e^{-\gamma}\bigr| t^{1.1}  dt<2.4+0.5|s|.$$
Hence $|h_a(s)|<|s|$ provided $|s|>4.8$. 
On the boundary of the rectangle $R$ we have $|s|\ge 5$, with the possible exception of the segment with endpoints $-\mu_a \pm 5i$.
For $s$ on this segment, and $1\le a \le 10$, we evaluate $|h_a(s)/s|$ numerically to get
$$\frac{|h_a(s)|}{|s|}=\frac{|h_a(-\mu_a+i\tau)|}{|-\mu_a+i\tau|} < 0.98, \quad (1 \le a \le 10, \ -5 \le \tau \le 5).$$
Hence $|h_a(s)|<|s|$ on the boundary of $R$, and 
Rouch\'e's theorem implies that $g_a(s)$ has exactly one zero with $\re(s)\ge  -\mu_a$.
\end{proof}

\pagebreak

\begin{lemma}\label{ga}
We write $s=\sigma + i \tau$ and
$$ H_a(\sigma):=  \frac{1}{a (a+1)^\sigma} 
+\frac{1}{a} \int_1^\infty  |\omega'(t)| \frac{dt}{(at+1)^{\sigma}} .$$
If $g_a(s)=0$ then $|\tau| \le H_a(\sigma)$. 
For $|\tau| \ge 2 H_a(\sigma)$, we have
$$ \frac{1}{g_a(s)}=\frac{1}{s} + O\left(\frac{H_a(\sigma)}{\sigma^2+\tau^2 } \right) .$$
\end{lemma}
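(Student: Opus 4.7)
The plan is to first rewrite $g_a(s)$ in a form that makes the bound $|g_a(s) - s| \le H_a(\sigma)$ transparent. The key step is an integration by parts in the integral appearing in $g_a(s)$, based on $(at+1)^{-s-1} = -(as)^{-1}\frac{d}{dt}(at+1)^{-s}$. Applied to
\[
s \int_1^\infty (\omega(t) - e^{-\gamma}) \frac{dt}{(at+1)^{s+1}},
\]
the boundary at $t=\infty$ vanishes thanks to the super-exponential decay $|\omega(t) - e^{-\gamma}| \le 1/\Gamma(t+1)$ from Lemma \ref{omega}(ii), while the boundary at $t=1$ contributes $(1-e^{-\gamma})/(a(a+1)^s)$ since $\omega(1) = 1$. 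Combined with the existing $e^{-\gamma}/(a(a+1)^s)$ term in $g_a(s)$, the $e^{-\gamma}$-factors cancel and I obtain the clean identity
\[
g_a(s) = s + \frac{1}{a(a+1)^s} + \frac{1}{a}\int_1^\infty \omega'(t) \frac{dt}{(at+1)^s},
\]
valid for every $s \in \mathbb{C}$, since $|\omega'(t)| \le 1/\Gamma(t+1)$ from Lemma \ref{omega}(i) guarantees absolute convergence for all $\sigma$.

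Writing $h_a(s) := g_a(s) - s$ and using $|(at+1)^s| = (at+1)^\sigma$, the triangle inequality applied to the displayed identity delivers the central estimate $|h_a(s)| \le H_a(\sigma)$. The first assertion of the lemma is then immediate: if $g_a(s) = 0$ then $s = -h_a(s)$, so $|\tau| \le |s| = |h_a(s)| \le H_a(\sigma)$. For the second assertion, suppose $|\tau| \ge 2 H_a(\sigma)$; then $|s| \ge |\tau| \ge 2H_a(\sigma) \ge 2|h_a(s)|$, which in particular gives $|g_a(s)| = |s + h_a(s)| \ge |s|/2 > 0$. Starting from the algebraic identity
\[
\frac{1}{g_a(s)} - \frac{1}{s} = -\frac{h_a(s)}{s\, g_a(s)},
\]
I conclude $\bigl|1/g_a(s) - 1/s\bigr| \le 2|h_a(s)|/|s|^2 \le 2 H_a(\sigma)/(\sigma^2+\tau^2)$, which is the stated $O$-bound.

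The main obstacle is the integration by parts step: one must justify the vanishing of the boundary at $t=\infty$ and the absolute convergence of the $\omega'$-integral uniformly in $s$ throughout the relevant half-plane, which includes moderately negative $\sigma$. Both rest on the super-exponential decay of $\omega(t) - e^{-\gamma}$ and $\omega'(t)$ supplied by Lemma \ref{omega}. Once the rewritten form of $g_a(s)$ is in hand, the rest of the argument is a short application of the triangle inequality and the elementary identity above.
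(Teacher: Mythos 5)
Your proposal is correct and follows essentially the same route as the paper: the same integration by parts (producing the identity $g_a(s) = s + \tfrac{1}{a(a+1)^s} + \tfrac{1}{a}\int_1^\infty \omega'(t)(at+1)^{-s}\,dt$, with the $e^{-\gamma}$ contributions canceling), the same triangle-inequality bound $|g_a(s)-s| \le H_a(\sigma)$, and the same elementary manipulation of $1/g_a(s) - 1/s$ in the regime $|\tau| \ge 2H_a(\sigma)$. The only cosmetic difference is that the paper packages the bound via a factor $\xi_a(s)$ with $|\xi_a(s)|\le 1$ rather than your explicit $h_a(s)=g_a(s)-s$; the content is identical.
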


\begin{proof}
Integration by parts shows that
$$
g_a(s) = s + \frac{1}{a (a+1)^s} 
+\frac{1}{a} \int_1^\infty  \omega'(t) \frac{dt}{(at+1)^{s}} 
 = s + H_a(\sigma) \xi_a(s),
$$
where $\xi_a(s) \in \mathbb{C}$ with  $|\xi_a(s)|\le 1$.
Thus any zero of $g_a(s)$ must satisfy $|\tau|\le |s| \le H_a(\sigma)$. 
We have
$$ g_a(s) = s \left(1+\frac{H_a(\sigma) \xi_a(s)}{s}\right),$$
from which the estimate for $1/g_a(s)$ follows.
\end{proof}

\begin{lemma}\label{FaInv}
The function $F_a(z)$ defined by \eqref{FaDef} satisfies
\begin{equation}\label{FaAsymp}
F_a(z)=c_a e^{-\lambda_a z} + O_a\left(e^{-\mu_a z}\right) \quad (z\ge 0),
\end{equation}
and
\begin{equation}\label{FapAsymp}
F'_a(z)=\tilde{c}_a e^{-\lambda_a z} + O_a\left(e^{-\mu_a z}\right) \quad (z\ge 0),
\end{equation}
for constants $c_a, \tilde{c}_a=-\lambda_a c_a$, where $\mu_a>\lambda_a+1$.
Here  $F'_a(z)$ denotes the right derivative of $F_a(z)$. 
\end{lemma}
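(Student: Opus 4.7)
The plan is to apply Laplace inversion to $\widehat{F}_a(s)=1/g_a(s)$ and shift the contour past the simple pole at $s=-\lambda_a$. The identity $\widehat{F}_a(s)=1/g_a(s)$ for $\re(s)>0$ has been established just prior to the statement of the lemma, and standard Laplace inversion gives
$$
F_a(z)=\frac{1}{2\pi i}\int_{c-i\infty}^{c+i\infty}\frac{e^{sz}}{g_a(s)}\,ds \qquad (c>0,\ z>0).
$$
By Lemma \ref{lambdamu}, $g_a(s)$ has no zero in the closed half-plane $\re(s)\ge -\mu_a$ apart from the simple zero at $-\lambda_a$. Using the decay of $1/g_a(s)$ from Lemma \ref{ga} to kill the horizontal segments $|\tau|=T\to\infty$, I would shift the contour from $\re(s)=c$ to $\re(s)=-\mu_a$, picking up only the residue
$$
\res_{s=-\lambda_a}\frac{e^{sz}}{g_a(s)}=\frac{e^{-\lambda_a z}}{g'_a(-\lambda_a)}=:c_a\,e^{-\lambda_a z},
$$
and leaving
$$
F_a(z)=c_a e^{-\lambda_a z}+\frac{1}{2\pi i}\int_{(-\mu_a)}\frac{e^{sz}}{g_a(s)}\,ds.
$$

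The delicate step is bounding the shifted integral, because Lemma \ref{ga} only supplies $|1/g_a(s)|\ll 1/|\tau|$ on $\re(s)=-\mu_a$, which is not absolutely integrable. I would handle this by writing
$$
\frac{1}{g_a(s)}=\frac{1}{s}+R(s),\qquad R(s):=\frac{1}{g_a(s)}-\frac{1}{s},
$$
where Lemma \ref{ga} gives $R(s)=O(1/|s|^2)$ for $|\tau|$ large, while $R$ is continuous on the whole line $\re(s)=-\mu_a$ because neither $g_a$ nor $s$ vanishes there. Hence $R\in L^1(\re(s)=-\mu_a)$, and its contribution to the shifted contour is bounded by $e^{-\mu_a z}\,\|R\|_{L^1}=O_a(e^{-\mu_a z})$. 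For the $1/s$-piece, a standard Jordan-lemma argument gives $\frac{1}{2\pi i}\int_{(-\mu_a)}e^{sz}/s\,ds=0$ for $z>0$, since the contour may be closed to the left where no poles are enclosed. This establishes \eqref{FaAsymp}.

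For the right derivative, a direct Laplace inversion via $\widehat{F'_a}(s)=s\widehat{F}_a(s)-F_a(0)=s/g_a(s)-1$ is less convenient, as $s/g_a(s)-1$ fails to decay as $|\tau|\to\infty$ on $(-\mu_a)$. I would instead differentiate \eqref{FaDef} directly; since $\Omega_a(0)=\omega(0)=0$, this yields
$$
F'_a(z)=-\frac{1}{a}\int_0^{z}F_a(u)\,\Omega'_a(z-u)\,du,
$$
into which I would substitute \eqref{FaAsymp}. The super-exponential decay $|\omega'(t)|\le 1/\Gamma(t+1)$ from Lemma \ref{omega} makes each resulting integral converge rapidly, producing a leading term proportional to $e^{-\lambda_a z}$ together with an error of order $e^{-\mu_a z}$; the coefficient must equal $\tilde c_a=-\lambda_a c_a$, as forced by the residue of $s\widehat{F}_a(s)-1$ at $-\lambda_a$ being $-\lambda_a/g'_a(-\lambda_a)$. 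Finally, $\mu_a>\lambda_a+1$ is immediate from $\mu_a=u_a+1$ and $u_a\ge\lambda_a$, both from Lemma \ref{lambdamu}.

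The main obstacle is precisely the borderline $1/|\tau|$ decay of $1/g_a(s)$ on vertical lines: a naive modulus estimate of the shifted contour integral diverges, so the $1/s$-subtraction trick is essential, with Jordan's lemma cleanly dispatching the leftover $1/s$-integral because the shifted contour lies strictly to the left of the origin.
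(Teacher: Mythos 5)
Your derivation of \eqref{FaAsymp} is correct and conceptually the same as the paper's (inverse Laplace integral, shift past the simple pole at $-\lambda_a$, bound the line $\re s=-\mu_a$); the execution differs in a minor technical way. The paper works with a finite rectangle whose height $T=\exp(z(\mu_a+1))$ depends on $z$ and bounds the tails of the Bromwich integral, the horizontal edges, and the shifted vertical segment separately using Lemma~\ref{ga}. You instead shift the full line, split $1/g_a(s)=1/s+R(s)$ with $R\in L^1(\re s=-\mu_a)$ and dispatch the $1/s$-piece by Jordan's lemma. Both are valid; yours is slightly slicker, while the paper's avoids invoking Jordan's lemma and the principal-value interpretation of the remaining $1/s$-integral.

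However, your treatment of \eqref{FapAsymp} has a genuine gap: the formula
$$
F'_a(z)=-\frac{1}{a}\int_0^{z}F_a(u)\,\Omega'_a(z-u)\,du
$$
is false. You correctly observe that the boundary term at $u=z$ vanishes because $\Omega_a(0)=\omega(0)=0$, but you overlook that $\omega$ has a jump discontinuity at $t=1$ (the paper's convention is $\omega(t)=0$ for $t<1$ and $\omega(1)=1$), so $\Omega_a(u)=\omega((e^u-1)/a)$ jumps from $0$ to $1$ at $u=\log(a+1)$. Consequently $\Omega_a(z-u)$ vanishes for $u>z-\log(a+1)$, and differentiating
$$
F_a(z)=1-\frac{1}{a}\int_0^{z-\log(a+1)} F_a(u)\,\Omega_a(z-u)\,du
$$
by Leibniz's rule produces the extra boundary term $-\frac{1}{a}F_a\bigl(z-\log(a+1)\bigr)$, which the paper keeps and which is of leading order $\asymp e^{-\lambda_a z}$ (not an error term). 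As written, your formula omits a main-term contribution to $F'_a$, and the bound $|\omega'(t)|\le 1/\Gamma(t+1)$ you cite applies to the classical derivative on $(1,\infty)$, not to the Dirac mass at $t=1$. Your final appeal to the residue of $s\widehat{F}_a(s)-1$ does identify the correct coefficient $\tilde c_a=-\lambda_a c_a$, but the convolution-integral derivation supporting the error bound $O(e^{-\mu_a z})$ needs the corrected formula \eqref{FapEq} with the boundary term included.
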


\begin{proof}
We evaluate the inverse Laplace integral 
$$ F_a(z) = \frac{1}{2\pi i} \int_{1-i\infty}^{1+i\infty} \widehat{F}_a(s) e^{zs}\, \mathrm{d} s.$$
Let $\mu=\mu_a=u_a+1$ from Lemma \ref{lambdamu} and put $T=\exp(z(\mu+1))$.
Since the result is trivial for bounded $z$, we may assume that $z$ is sufficiently large such that $T>2H(-\mu)$. We have
$$ \int_{1+iT}^{1+i\infty}  \widehat{F}_a(s) e^{zs}\, \mathrm{d} s 
= \int_{1+iT}^{1+i\infty} \frac{1}{s} e^{zs}\, \mathrm{d} s + O\left(e^{z}/T \right)=O\left(e^{-\mu z}\right),$$
by Lemma \ref{ga} and integration by parts applied to the last integral.
We apply the residue theorem to the rectangle with vertices $-\mu \pm iT$, $1\pm iT$. 
The contribution from the horizontal segments can be estimated by Lemma \ref{ga} as 
$$ \int_{-\mu+iT}^{1+iT} \widehat{F}_a(s) e^{zs}\, \mathrm{d} s \ll \frac{e^{z}}{T}  =O\left(e^{-\mu z}\right).$$
For the vertical segment with $\re s = -\mu$ we have
$$ \left| \int_{-\mu-i2H_a(-\mu)}^{-\mu+i2H_a(-\mu)} \widehat{F}_a(s) e^{zs}\, \mathrm{d} s \right| \le 4H_a(-\mu) \max_{|\tau|\le 2H_a(-\mu)} \left|\widehat{F}_a(-\mu+i\tau)\right| e^{-\mu z} = O\left(e^{-\mu z}\right)$$
and
$$ \int_{-\mu+i2H_a(-\mu)}^{-\mu+iT} \widehat{F}_a(s) e^{zs}\, \mathrm{d} s = \int_{-\mu+i2H_a(-\mu)}^{-\mu+iT} \left(\frac{1}{s}+O(\tau^{-2})\right) e^{zs}\, \mathrm{d} s =O\left(e^{-\mu z}\right).$$
The residue theorem now yields
$$ F_a(z)= \res\left( \widehat{F}_a(s) e^{zs}; -\lambda_a \right)+O\left(e^{-\mu z}\right) = c_a e^{-\lambda_a z }+O\left(e^{-\mu z}\right) ,$$
which completes the proof of \eqref{FaAsymp}.

If $0\le z < \log(a+1)$, we have $F_a(z)=1$ and hence $F'_a(z)=0$. 
If $z\ge \log(a+1)$, \eqref{FaDef} implies
\begin{equation}\label{FapEq}
F'_a(z)= -\frac{1}{a} \int_0^{z-\log(a+1)} F_a(u) \, \omega'\left(\frac{e^{z-u}-1}{a}\right)  \frac{e^{z-u}}{a} \, \mathrm{d} u 
 -\frac{1}{a} F_a(z-\log(a+1)).
\end{equation}
The estimate \eqref{FapAsymp} follows from applying  \eqref{FaAsymp} to each occurrence of $F_a$ on the right-hand side of \eqref{FapEq},
and the fact that $\omega'(t) \ll_A t^{-A}$ for any constant $A$, by Lemma \ref{omega}.

\end{proof}

We are now ready to finish the proof of Theorem \ref{thm4}.
Since $\widehat{F'_a}(s)=s\widehat{F}_a(s)-F_a(0)$ and $F_a(0)=1$, \eqref{LaplaceGF} yields
$$    \widehat{G}_\theta(s) = \alpha \widehat{F}_a(s) + \widehat{E}_\theta(s) +\widehat{F'_a}(s) \widehat{E}_\theta(s),$$
and thus
\begin{equation}\label{last}
G_\theta(z)= \alpha F_a(z) + E_\theta(z) + \int_0^z F'_a(z-u) E_\theta(u) du.
\end{equation}
We estimate $F_a(z)$ and $F'_a(z-u)$ using Lemma \ref{FaInv}. 
Since $E_\theta(u) \ll e^{-(\nu+1)u}$ by \eqref{Error}, $\nu \ge 0$ and $\lambda_a\le 1$, 
 \eqref{last} shows that $G_\theta(z) \ll z e^{-\lambda_az}$. 
 Thus $\nu= \lambda_a-\varepsilon$ is acceptable in \eqref{Best} for every $\varepsilon>0$.
 With this choice of $\nu$,  \eqref{last} shows that $G_\theta(z) \ll  e^{-\lambda_az}$,
 which means that $\nu=\lambda_a$ is acceptable in \eqref{Best}.
Hence we assume $\nu = \lambda_a$ and $E_\theta(u) \ll e^{-(\lambda_a+1)u}$ for the remainder of this proof. 
The contribution to the integral in \eqref{last} from the main term in Lemma \ref{FaInv} is
\begin{equation*}
\begin{split}
& \int_0^z \tilde{c}_a  e^{-\lambda_a (z-u)} E_\theta(u) du \\
&=  \tilde{c}_a  e^{-\lambda_a z} \left(  \int_0^\infty e^{\lambda_a u} E_\theta(u) du 
+ O\left( \int_z^\infty e^{\lambda_a u} e^{-(\lambda_a+1)u} du\right) \right) \\
&=\tilde{c}_a   e^{-\lambda_a z}\left( \beta + O(e^{-z})\right),
\end{split}
\end{equation*}
say.
The contribution from the error term in Lemma \ref{FaInv} to the integral in \eqref{last} is
$$
\ll \int_0^z e^{-\mu_a(z-u)} e^{-(\lambda_a+1)u} du 
\ll e^{-(\lambda_a+1)z},
$$
since $\mu_a>\lambda_a +1$. Hence \eqref{last} implies
$$ G_\theta(z)= (\alpha c_a + \beta \tilde{c}_a )   e^{-\lambda_a z} + O\left(e^{-(\lambda_a+1)z}\right).$$
With 
$c_\theta :=\alpha c_a + \beta \tilde{c}_a$,
  we get
$$  G_\theta(z)=B(x)/x = c_\theta (\log x)^{-\lambda_a} + O \left((\log x)^{-(\lambda_a+1)}\right).$$
It remains to show that $c_\theta >0$ if $a\ge 1$. Since $\theta(n)\gg n^a \ge n$, there exists an integer $k$
such that $\theta(2^k n)\ge 2n$ for all $n\ge 1$. With $\theta_0(n)=2n$, we have $n\in \mathcal{B}_{\theta_0}$ implies 
$2^k n \in \mathcal{B}_\theta$. Hence $B_\theta(x) \ge B_{\theta_0}(x/2^k) \gg x/\log x$, by \cite[Theorem 1.2]{PDD}.
Since $\lambda_a+1>1$, we must have $c_\theta>0$.

\section{Proof of Theorem \ref{DB}}

(i) Assume that $n=p_1^{\alpha_1} \cdots p_k^{\alpha_k} \in \mathcal{D}$, where $p_1< p_2< \ldots < p_k$.
Let $0\le j < k$ and write $d_i=p_1^{\alpha_1} \cdots p_{j}^{\alpha_{j}}$. The next larger divisor, $d_{i+1}$, satisfies $d_{i+1}\ge p_{j+1}$, since $d_{i+1}$ must be divisible by at least one of the primes $p_{j+1},\ldots,p_k$.
Thus
$$ p_{j+1} \le d_{i+1} \le \theta(d_i) = \theta(p_1^{\alpha_1} \cdots p_{j}^{\alpha_{j}}),$$
which means that $n\in \mathcal{B}$.

(ii) Assume $\theta$ satisfies \eqref{thetacond}.
To show that $n\in \mathcal{B}$ implies $n\in \mathcal{D}$ for all $n\ge 2$,
we proceed by induction on $k$, the number of distinct prime factors of $n$. When $k=1$, $n=p^\alpha \in \mathcal{B}$ implies $p\le \theta(1)$.
For $1\le j \le \alpha$, we have
$$d_{j+1} = p^j = p^{j-1} p \le  p^{j-1} \theta(1) \le   \theta( p^{j-1}) =\theta(d_j),$$
which means that $n\in \mathcal{D}$.

Now assume that, for some $k\ge 1$ and each  $m=p_1^{\alpha_1} \cdots p_{k}^{\alpha_{k}}$, if $m \in \mathcal{B}$ 
then $m\in \mathcal{D}$. Let $n=m p^\alpha \in \mathcal{B}$, where $p>p_k$ and $m=p_1^{\alpha_1} \cdots p_{k}^{\alpha_{k}}$.
By the definition of $\mathcal{B}$, we have $m \in \mathcal{B}$, and hence $m\in \mathcal{D}$.
Assume that $1=t_1<t_2<\ldots < t_r=m$ are the divisors of $m$.
Then the divisors of $n$, say $1=d_1<d_2<\ldots <d_l=n$, are of the form
$ d_i = p^\beta t_j $, where $0\le \beta \le \alpha$, $1\le j \le r$. If $j<r$, we have
$$
d_{i+1} \le p^\beta t_{j+1} \le p^\beta \theta(t_j)  \le   \theta(p^\beta t_j)=\theta(d_i),
$$
as desired. If $j=r$, then $d_i=p^\beta m$ and we may assume $0\le \beta < \alpha$, that is $d_i<n$.
If $p>m$, then $p^{\beta+1}>p^\beta m=d_{i}$. Also, $n\in \mathcal{B}$ implies $p\le \theta(m)$, so
$$ d_{i+1}\le p^{\beta+1}\le p^\beta \theta(m) \le \theta( p^\beta m) = \theta(d_i).
$$
If $p \le m$, then $t_s \le m/p < t_{s+1}$, for some $s$ with $1\le s < r$. 
Now $d_i=p^\beta m<p^{\beta+1} t_{s+1} $, so
$$ d_{i+1} \le  p^{\beta+1} t_{s+1} \le  p^{\beta+1} \theta(t_s) \le \theta( p^{\beta+1} t_s).
$$
Since $t_s \le m/p$, we have $p^{\beta+1} t_s \le m p^\beta = d_i$ and hence $\theta( p^{\beta+1} t_s) \le \theta(d_i)$, as $\theta(n)$ is non-decreasing. Thus $d_{i+1} \le \theta(d_i)$ also holds in this case, which means that $n\in \mathcal{D}$.

\end{document}